\documentclass{article}%
\usepackage{amsmath}
\usepackage{amsfonts}
\usepackage{amssymb}
\usepackage{graphicx}%
\setcounter{MaxMatrixCols}{30}
\newtheorem{theorem}{Theorem}
\newtheorem{acknowledgement}[theorem]{Acknowledgement}

\newtheorem{claim}[theorem]{Claim}

\newtheorem{definition}[theorem]{Definition}

\newtheorem{lemma}[theorem]{Lemma}

\newtheorem{problem}[theorem]{Problem}
\newtheorem{proposition}[theorem]{Proposition}

\newenvironment{proof}[1][Proof]{\noindent\textbf{#1.} }{\ \rule{0.5em}{0.5em}}
\begin{document}

\date{}
\title{On $\left(  1,\omega_{1}\right)  $\emph{-}weakly universal functions}
\author{Osvaldo Guzm\'{a}n\thanks{The author was supported by NSERC of Canada.\newline%
\textit{keywords: }Universal graphs, weakly universal graphs, Sacks model,
Cohen model.\newline\textit{AMS Classification: }03E35, 03E05 , 03E40}}
\maketitle

\begin{abstract}
A function $U:\left[  \omega_{1}\right]  ^{2}\longrightarrow\omega$ is called
$\left(  1,\omega_{1}\right)  $\emph{-weakly universal }if for every function
$F:\left[  \omega_{1}\right]  ^{2}\longrightarrow\omega$ there is an injective
function $h:\omega_{1}\longrightarrow\omega_{1}$ and a function $e:\omega
\longrightarrow\omega$ such that $F\left(  \alpha,\beta\right)  =e\left(
U\left(  h\left(  \alpha\right)  ,h\left(  \beta\right)  \right)  \right)  $
for every $\alpha,\beta\in\omega_{1}$. We will prove that it is consistent
that there are no $\left(  1,\omega_{1}\right)  $\emph{-}weakly universal
functions, this answers a question of Shelah and Stepr\={a}ns. In fact, we
will prove that there are no $\left(  1,\omega_{1}\right)  $\emph{-}weakly
universal functions in the Cohen model and after adding $\omega_{2}$ Sacks
reals side-by-side. However, we show that there are $\left(  1,\omega
_{1}\right)  $\emph{-}weakly universal functions in the Sacks model. In
particular, the existence of such graphs is consistent with $\clubsuit$ and
the negation of the Continuum Hypothesis.

\end{abstract}

\subsection{Introduction and Preliminaries}

A graph $U:\left[  \omega_{1}\right]  ^{2}\longrightarrow2$ is called
\emph{universal }if for every graph $F:\left[  \omega_{1}\right]
^{2}\longrightarrow2$ there is an injective function $h:\omega_{1}%
\longrightarrow\omega_{1}$ such that $F\left(  \alpha,\beta\right)  =U\left(
h\left(  \alpha\right)  ,h\left(  \beta\right)  \right)  $ for each
$\alpha,\beta\in\omega_{1}.$ It is easy to see that universal graphs exist
assuming the Continuum Hypothesis, and in \cite{ShelahUniversal1} and
\cite{ShelahUniversal2} Shelah showed that the existence of universal
functions is consistent with the failure of \textsf{CH.} In
\cite{MeklerUniversal} Mekler showed that the existence of universal functions
$U:\left[  \omega_{1}\right]  ^{2}\longrightarrow\omega$ is also consistent
with the failure of the Continuum Hypothesis. Universal graphs and functions
were recently studied by Shelah and Stepr\={a}ns in
\cite{UniversalGraphsandFunctionsonw1}, where they showed that the existence
of universal graphs is consistent with several values of $\mathfrak{b}$ and
$\mathfrak{d}.$ They also considered several variations of universal
functions, in particular, the following notion was studied:

\begin{definition}
A function $U:\left[  \omega_{1}\right]  ^{2}\longrightarrow\omega$ is
$\left(  1,\omega_{1}\right)  $-weakly universal\emph{ }if for every
$F:\left[  \omega_{1}\right]  ^{2}\longrightarrow\omega$ there is an injective
function $h:\omega_{1}\longrightarrow\omega_{1}$ and a function $e:\omega
\longrightarrow\omega$ such that $F\left(  \alpha,\beta\right)  =e\left(
U\left(  h\left(  \alpha\right)  ,h\left(  \beta\right)  \right)  \right)  $
for every $\alpha,\beta\in\omega_{1}$.
\end{definition}

\qquad\ \ 

Evidently, every universal function is $\left(  1,\omega_{1}\right)  $-weakly
universal. In \cite{UniversalGraphsandFunctionsonw1} it was proved that a
function $U:\left[  \omega_{1}\right]  ^{2}\longrightarrow\omega$ is $\left(
1,\omega_{1}\right)  $-weakly universal\emph{ }if and only if for every
$F:\left[  \omega_{1}\right]  ^{2}\longrightarrow\omega$ there is an injective
function $h:\omega_{1}\longrightarrow\omega_{1}$ such that if $F\left(
\alpha,\beta\right)  \neq F\left(  \alpha_{1},\beta_{1}\right)  $ then
$U\left(  h\left(  \alpha\right)  ,h\left(  \beta\right)  \right)  \neq
U\left(  h\left(  \alpha_{1}\right)  ,h\left(  \beta_{1}\right)  \right)  $
for every $\alpha,\beta,\,\alpha_{1},\beta_{1}\in\omega_{1}.$

\qquad\ \ \ \ 

In an unpublished note of Tanmay Inamdar, it was proved that $\left(
1,\omega_{1}\right)  $-weakly universal functions exist assuming Martin's
axiom for Knaster forcings (see \cite{UniversalGraphsandFunctionsonw1}). In
\cite{UniversalGraphsandFunctionsonw1} Shelah and Stepr\={a}ns asked the following:

\begin{problem}
[\cite{UniversalGraphsandFunctionsonw1}]Is there (in \textsf{ZFC}) a $\left(
1,\omega_{1}\right)  $-weakly universal function?
\end{problem}

\qquad\ \ \ \ 

In this note, we answer the previous question in the negative. For more on
universal graphs and functions, the reader may consult
\cite{UniversalFunctions} and \cite{UniversalGraphsandFunctionsonw1}.

\qquad\ \ \ \qquad\ \ \qquad\ \ \ \ \ \ 

Recall that $\clubsuit$ is the following statement:

\begin{description}
\item[$\clubsuit)$] There is a family $\left\{  S_{\alpha}\mid\alpha\in
LIM\left(  \omega_{1}\right)  \right\}  $ such that each $S_{\alpha}$ is an
unbounded subset of $\alpha$ and for every $X\in\left[  \omega_{1}\right]
^{\omega_{1}}$ the set $\left\{  \alpha\mid S_{\alpha}\subseteq X\right\}  $
is stationary.
\end{description}

\qquad\ \ \ \qquad\ \ 

The principle $\clubsuit$ is a weakening of the $\Diamond$ principle. It is
well known that $\clubsuit$ is consistent with the failure of the Continuum
Hypothesis (see \cite{ProperandImproper}, \cite{SticksandClubs},
\cite{LifeintheSacksModel} or
\cite{CardinalInvariantsoftheContinuumandCombinatoricsonUncountableCardinals}%
). The \emph{stick principle }(introduced in \cite{Stick})\emph{ }is a
weakening of $\clubsuit:$

\begin{description}
\item[$_{\mid}^{\bullet})$] There is a family $\left\{  S_{\alpha}\mid
\alpha\in\omega_{1}\right\}  \subseteq\left[  \omega_{1}\right]  ^{\omega}$
such that for every $X\in\left[  \omega_{1}\right]  ^{\omega_{1}}$ there is an
$\alpha\in\omega_{1}$ such that $S_{\alpha}\subseteq X.$
\end{description}

\qquad\ \ 

It is easy to see that the stick principle is a consequence of both
$\clubsuit$ and \textsf{CH}. For more on $\clubsuit$ and $_{\mid}^{\bullet}$
the reader may consult
\cite{CardinalInvariantsoftheContinuumandCombinatoricsonUncountableCardinals},
\cite{SticksandClubs}, \cite{Diamantesubd} and \cite{SimilarbutnottheSame}.

\qquad\ \ 

We say that a tree $p\subseteq2^{<\omega}$ is a \emph{Sacks tree }if for every
$s\in p$ there is $t\in p$ extending $s$ such that $t^{\frown}0,$ $t^{\frown
}1\subseteq p.$ The set of all Sacks trees is denoted by $\mathbb{S}$ and we
order it by extension. Given an ordinal $\alpha,$ by $\mathbb{S}^{\alpha}$ we
will denote the countable support product of $\alpha$ copies of Sacks forcing
and by $\mathbb{S}_{\alpha}$ we denote $\alpha$-iteration of $\mathbb{S}$ with
countable support. By the \emph{Sacks model} we mean the model obtained after
forcing with $\mathbb{S}_{\omega_{2}}$ and by the \emph{side-by-side Sacks
model} we mean the model obtained after forcing with $\mathbb{S}^{\omega_{2}}$
to a model of G\textsf{CH}$\mathsf{.}$ Although the partial orders
$\mathbb{S}^{\omega_{2}}$ and $\mathbb{S}_{\omega_{2}}$ are not forcing
equivalent, they share very similar features. It is then interesting to point
out differences between this two forcing notions. Some of the main differences
between them are the following:

\begin{enumerate}
\item In the Sacks model every subset of reals of size $\omega_{2}$ can be
mapped continuously onto the reals, while in the side-by-side Sacks model this
is not the case (see \cite{SacksMiller}).

\item The cardinal invariant $\mathfrak{hm}$\footnote{The cardinal invariant
$\mathfrak{hm}$ is the smallest size of a family of $c_{min}$-monochromatic
sets required to cover the Cantor space (where $c_{min}\left(  x,y\right)  $
is the parity of the largest initial segment common to both $x$ and $y$). It
is known that $\mathfrak{c}^{-},cof\left(  \mathcal{N}\right)  \leq
\mathfrak{hm}$ (see \cite{Convexdecompositionsintheplane}). It is an open
question of Geschke if the inequality $\mathfrak{hm<r}$ is consistent. In a
yet unpublished work, the author proved that the inequality $\mathfrak{hm<u}$
is consistent.} is evaluated differently on the Sacks model and in the
side-by-side Sacks model (see \cite{Convexdecompositionsintheplane}).

\item The \textsf{CPA }axioms hold in the Sacks model but not in the
side-by-side Sacks model (see \cite{CPAbook}).
\end{enumerate}

\qquad\ \ \ \qquad\ \ 

In this note, we will point out another difference: There are $\left(
1,\omega_{1}\right)  $-weakly universal functions in the Sacks model, while
there are no such graphs in the side-by-side Sacks model. In
\cite{UniversalFunctions} it was proved that $_{\mid}^{\bullet}+\mathfrak{c}%
>\omega_{1}$ implies that there is no universal function $U:\left[  \omega
_{1}\right]  ^{2}\longrightarrow2.$ However, our results show that the
existence of $\left(  1,\omega_{1}\right)  $-weakly universal functions is
even consistent with $\clubsuit+\mathfrak{c}>\omega_{1}.$

\subsection{The countable support product of Sacks forcing}

The \emph{Sacks side-by-side model} is the model obtained by forcing with
$\mathbb{S}^{\omega_{2}}$ over a model of the Generalized Continuum
Hypothesis. We will prove that there are no $\left(  1,\omega_{1}\right)
$-weakly universal graphs in the Sacks side-by-side model.

\qquad\ \ 

We will need the following lemma:\qquad\ \ 

\begin{lemma}
There is a function $\pi:2^{\omega}\longrightarrow\omega^{\omega}$ such that
for every $r\in2^{\omega},$ if $f$ is an infinite partial function such that
$f\subseteq\pi\left(  r\right)  ,$ then $r$ is definable from $f.$
\end{lemma}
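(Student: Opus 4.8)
The plan is to make $\pi(r)$ record, at every coordinate $n$, a longer and longer initial segment of $r$, using a fixed recursive coding of finite binary strings into $\omega$. First I would fix an injective function $c:2^{<\omega}\longrightarrow\omega$ together with its decoding, for instance $c(s)=2^{|s|}+\sum_{i<|s|}s(i)2^{i}$, so that $c$ maps the length-$k$ strings bijectively onto $[2^{k},2^{k+1})$ and from the single number $c(s)$ one recovers both the length $|s|$ and the string $s$ by reading off binary digits. This $c$ and its inverse are arithmetically definable and absolute. Then I define $\pi:2^{\omega}\longrightarrow\omega^{\omega}$ by $\pi(r)(n)=c(r\upharpoonright n)$. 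Note that $\pi$ is continuous and that, for fixed $r$, the map $n\mapsto\pi(r)(n)$ is injective, since distinct coordinates code distinct strings.

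The verification then rests on the observation that every infinite subset of $\omega$ is cofinal. Suppose $f$ is an infinite partial function with $f\subseteq\pi(r)$, and set $D=\dom(f)$. For each $n\in D$ we have $f(n)=\pi(r)(n)=c(r\upharpoonright n)$, so applying the fixed decoding to $f(n)$ returns exactly $r\upharpoonright n$. Since $D$ is infinite it is unbounded in $\omega$, so the initial segments $\{r\upharpoonright n\mid n\in D\}$ are pairwise comparable and cofinal in $r$. Taking their union recovers all of $r$; explicitly,
\[
r=\bigcup_{n\in\dom(f)}c^{-1}\bigl(f(n)\bigr),
\]
where $c^{-1}$ denotes the decoding procedure. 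As the right-hand side is a formula whose only parameter is $f$ (the coding $c$ being fixed and definable), this exhibits $r$ as definable from $f$, which is exactly what the lemma demands.

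I do not anticipate a serious obstacle here; the entire content lies in choosing a coding that is robust under passage to an infinite subfunction. The one point to handle with care is that the decoding should not require knowing in advance which coordinates lie in $\dom(f)$: this is why each value $c(r\upharpoonright n)$ is arranged to carry its length $n$ internally, making every single value of $f$ self-describing, so that the union formula above makes sense verbatim without reference to $n$. It is also worth recording for later use that $\pi$ is continuous and coordinatewise injective, properties that are likely to be convenient when this coding is deployed in the side-by-side Sacks analysis.
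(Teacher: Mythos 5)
Your proof is correct and follows essentially the same idea as the paper's: each value $\pi(r)(n)$ encodes an initial segment of $r$ of length at least $n$ via a fixed definable coding of $2^{<\omega}$ into $\omega$, so any infinite subfunction has unbounded domain and recovers $r$ as the union of the decoded initial segments. The only cosmetic difference is that the paper defines $\pi(r)(n)$ as the least code (under a bijection $h:\omega\longrightarrow2^{<\omega}$) of an initial segment of length at least $n$, whereas you code $r\upharpoonright n$ directly.
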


\begin{proof}
Let $h:\omega\longrightarrow2^{<\omega}$ be a definable bijection. We define
$\pi:2^{\omega}\longrightarrow\omega^{\omega}$ as follows: if $r\in2^{\omega}$
and $n\in\omega$ then $\pi\left(  r\right)  \left(  n\right)  =m$ if $m$ is
the least natural number such that $h\left(  m\right)  $ is an initial segment
of $r$ and $h\left(  m\right)  $ has length at least $n.$ It is easy to see
that $\pi$ has the desired property.
\end{proof}

\qquad\ \ \qquad\ \ \ 

Note that if $M$ is a transitive model of \textsf{ZFC} and $r\notin M$ then
$\pi\left(  r\right)  $ does not contain infinite partial functions from $M.$
We will use the following unpublished result of Baumgartner (the reader may
consult \cite{LifeintheSacksModel} for a proof):

\begin{proposition}
[Baumgartner]The principle $\clubsuit$ holds in the Sacks side-by-side model.
\end{proposition}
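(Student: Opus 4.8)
The plan is to verify $\clubsuit$ directly from a ground-model sequence. Work over the ground model $V$, which satisfies GCH; we may assume in addition that $\Diamond$ holds in $V$ (for instance $V=L$, or $\Diamond$ is first forced by a preliminary $\sigma$-closed forcing that does not disturb GCH), and fix there a $\clubsuit$-sequence $\langle S_\alpha \mid \alpha \in LIM(\omega_1)\rangle$ with each $S_\alpha$ cofinal in $\alpha$ of order type $\omega$. Let $G$ be $\mathbb{S}^{\omega_2}$-generic over $V$. Since $\mathbb{S}^{\omega_2}$ is proper and $\omega^\omega$-bounding it preserves $\omega_1$ and all cofinalities, so each $S_\alpha$ is still a cofinal subset of $\alpha$ in $V[G]$. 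I claim that the same sequence witnesses $\clubsuit$ in $V[G]$; the only thing to check is that for every uncountable $X \subseteq \omega_1$ in $V[G]$ the set $\{\alpha \mid S_\alpha \subseteq X\}$ meets every club.

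By a standard density reformulation it suffices to show: for every $p \in \mathbb{S}^{\omega_2}$, every name $\dot X$ with $p \Vdash \dot X \in [\omega_1]^{\omega_1}$, and every name $\dot C$ with $p \Vdash \dot C$ is a club, there are $q \le p$ and $\alpha < \omega_1$ with $q \Vdash \alpha \in \dot C \wedge S_\alpha \subseteq \dot X$. First I would fix a large regular $\theta$ and build a continuous $\in$-chain $\langle N_i \mid i < \omega_1\rangle$ of countable elementary submodels of $H(\theta)$ with $p, \dot X, \dot C, \mathbb{S}^{\omega_2}, \langle S_\alpha\rangle \in N_0$ and $\langle N_j \mid j \le i\rangle \in N_{i+1}$; put $\delta_i = N_i \cap \omega_1$, so $E = \{\delta_i \mid i < \omega_1\}$ is a club. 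The usual properness computation shows that if $q$ is any $(N_i,\mathbb{S}^{\omega_2})$-generic (master) condition then $q \Vdash \delta_i \in \dot C$ and $q \Vdash \dot X \cap \delta_i$ is unbounded in $\delta_i$, because $N_i[G] \cap \omega_1 = \delta_i$ and $N_i[G] \prec H(\theta)^{V[G]}$ still sees $\dot X$ and $\dot C$ as unbounded. Thus the membership $\alpha \in \dot C$ is automatic for any $\alpha = \delta_i$ reached by a master condition, and the whole problem reduces to forcing the inclusion $S_\alpha \subseteq \dot X$ for a suitable $\alpha \in E$.

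To locate $\alpha$ I would use $\clubsuit$ back in $V$. Set $A = \{\gamma < \omega_1 \mid \exists r \le p \ \ r \Vdash \gamma \in \dot X\}$; this is a ground-model set, and since $p$ forces $\dot X$ unbounded, $A$ is uncountable. Applying $\clubsuit$ in $V$ to $A$ and intersecting the resulting stationary set with the club $E$ yields some $\alpha = \delta_i \in E$ with $S_\alpha \subseteq A$; write $S_\alpha = \{\gamma_k \mid k < \omega\}$ increasing and cofinal in $\alpha$. The remaining task is to manufacture a single condition $q \le p$ that is $(N_i,\mathbb{S}^{\omega_2})$-generic and simultaneously forces $\gamma_k \in \dot X$ for every $k$. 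This is where the particular structure of the countable-support product is indispensable: I would build $q$ by a fusion of length $\omega$ interleaved with the chain, using the Sacks property (\emph{continuous reading of names}) so that each statement $\gamma_k \in \dot X$ is decided by a finite part of the generic. Because $\gamma_k \to \alpha$ while $\delta_j \uparrow \alpha$, I would arrange that the $k$-th splitting demand, and the pruning that forces $\gamma_k \in \dot X$, act only on splitting levels and coordinates that first become relevant at stage $\delta_{j_k}$ of the chain, with $j_k \uparrow i$; since these are fresh at each step they do not disturb the finitely many decisions already frozen by the fusion order, keeping all the demands mutually compatible.

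The main obstacle is precisely this last simultaneity: individually each $\gamma_k \in A$ can be forced into $\dot X$, but a priori the required extensions could be pairwise incompatible or could clash with the splitting levels that the fusion must preserve. Overcoming it requires the alignment described above, handling $\gamma_k$ within the $k$-th model of the chain and on coordinates and levels not yet fixed, so that the fusion limit $q$ is a legitimate condition which is master over $N_i$ and forces $S_\alpha = \{\gamma_k \mid k<\omega\} \subseteq \dot X$. Granting this, $q \Vdash \alpha \in \dot C \wedge S_\alpha \subseteq \dot X$, establishing the density and hence that $\langle S_\alpha\rangle$ is a $\clubsuit$-sequence in $V[G]$. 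The bookkeeping of the fusion for the product $\mathbb{S}^{\omega_2}$ (finite approximations to the countable supports together with finitely many splitting levels per coordinate) is routine once the alignment is set up, so I expect the real content of the proof to be the combinatorial lemma that the potential-membership set $A$ can be met continuously and compatibly along the chain.
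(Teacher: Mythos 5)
The paper does not actually prove this proposition: it quotes it as an unpublished result of Baumgartner and refers to \cite{LifeintheSacksModel} for a proof, using later only the weaker consequence that every uncountable subset of $\omega_{1}$ in the extension contains a countable ground-model set. So your attempt must stand on its own, and it has a genuine gap at its central step. The problem is the set $A=\{\gamma<\omega_{1}\mid\exists r\leq p\ (r\Vdash\gamma\in\dot{X})\}$: membership of $\gamma$ in $A$ is witnessed by conditions that may be pairwise incompatible, so the conclusion $S_{\alpha}\subseteq A$ obtained from ground-model $\clubsuit$ carries essentially no information about whether a \emph{single} condition can force $S_{\alpha}\subseteq\dot{X}$. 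Concretely, split $\omega_{1}$ in $V$ into disjoint uncountable sets $B_{0},B_{1}$ and let $\dot{X}$ be the name for $B_{j}$, where $j$ is the first bit of the generic real at coordinate $0$. Then $A=\omega_{1}$, so the requirement $S_{\alpha}\subseteq A$ is vacuous and your procedure may select any $\alpha=\delta_{i}\in E$. But a condition $q$ can force $S_{\alpha}\subseteq\dot{X}$ only if $S_{\alpha}\subseteq B_{0}$ or $S_{\alpha}\subseteq B_{1}$: if $q$ leaves the first bit undecided it would need $S_{\alpha}\subseteq B_{0}\cap B_{1}=\emptyset$, and if it decides the bit to be $j$ it needs $S_{\alpha}\subseteq B_{j}$. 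Nothing in your selection of $\alpha$ rules out that $S_{\alpha}$ meets both $B_{0}$ and $B_{1}$, in which case \emph{no} condition whatsoever forces $S_{\alpha}\subseteq\dot{X}$. Hence the difficulty you defer to the end ("the potential-membership set $A$ can be met continuously and compatibly along the chain") is not a routine fusion-bookkeeping lemma; as you have framed it, it is false, and no alignment of coordinates and splitting levels can repair it.

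The missing idea is that the guessing must happen at the level of names and conditions, not at the level of the ground-model trace $A$ — and this is exactly what your hypothesis $\Diamond$ (which you assume but never use) is for in the known arguments. By the $\omega_{2}$-chain condition a name for an uncountable subset of $\omega_{1}$ involves only $\omega_{1}$ many coordinates, and its trace on a countable elementary submodel is a countable object; a $\Diamond$-sequence in $V$ can guess such (condition, name) pairs, and for each correct guess one runs a fusion \emph{in the ground model} that produces simultaneously the ladder $S_{\delta}$ and a condition $q_{\delta}$ forcing $S_{\delta}$ into the guessed name. The ladder system must be built from these fusions, because the ordinals that a fusion can force into $\dot{X}$ depend on the fusion itself (different "pieces" $p_{\sigma}$ of a condition may force disjoint sets into $\dot{X}$, as in the example above). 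Note also that you cannot sidestep this by finding an uncountable ground-model subset of $\dot{X}$ and applying ground-model $\clubsuit$ to it: the set $\{\gamma<\omega_{1}\mid s_{\gamma}(0)=1\}$, where $s_{\gamma}$ is the $\gamma$-th coordinate generic, is uncountable in the extension but contains no uncountable ground-model set. Your master-condition treatment of the club $\dot{C}$ is fine and standard; it is the guessing half of the argument that needs to be rebuilt along the lines just described.
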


In fact, we will only use that every uncountable subset of $\omega_{1}$ in the
Sacks side-by-side model contains a countable ground model set. Given a
function $F:\left[  \omega_{1}\right]  ^{2}\longrightarrow\omega$ and
$U:\left[  \omega_{1}\right]  ^{2}\longrightarrow\omega,$ we say that $\left(
h,e\right)  $ is an $\left(  1,\omega_{1}\right)  $\emph{-weakly universal
embedding from }$F$ \emph{to} $U$ if $h:\omega_{1}\longrightarrow\omega_{1}$
is an injective function, $e:\omega\longrightarrow\omega$ and $F\left(
\alpha,\beta\right)  =e\left(  U\left(  h\left(  \alpha\right)  ,h\left(
\beta\right)  \right)  \right)  $ for every $\alpha,\beta\in\omega_{1}$. We
can now prove the following result, answering the problem of Shelah and
Stepr\={a}ns:\qquad

\begin{proposition}
There are no $\left(  1,\omega_{1}\right)  $-weakly universal graphs in the
Sacks side-by-side model.
\end{proposition}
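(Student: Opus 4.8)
The plan is to assume toward a contradiction that such a $U$ exists in the extension $V[G]$ by $\mathbb{S}^{\omega_2}$ (with $V\models$ GCH), and to manufacture a single $F\colon[\omega_1]^2\to\omega$ that $U$ cannot weakly represent. First I would locate $U$ in a small inner model: coding $U$ as a subset of $\omega_1$ and using the standard factoring of the countable support product, there is $A\in[\omega_2]^{\omega_1}$ with $U\in M:=V[G\restriction A]$. I would then reserve $\aleph_1$ many Sacks reals living outside $M$: fix distinct coordinates $\{\eta_k\mid k<\omega_1\}\subseteq\omega_2\setminus A$, let $s_k$ be the Sacks real at $\eta_k$, and put $g_k:=\pi(s_k)$. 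Since $s_k\notin M$, the Note following the first Lemma tells us that $g_k$ contains no infinite partial function belonging to $M$; the whole strategy is engineered to produce exactly such a forbidden partial function.

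For the construction of $F$ I would fix once and for all: a partition $\omega_1=\bigsqcup_{k<\omega_1}Q_k$ into $\aleph_1$ many uncountable pieces; an injection $c\colon\omega_1\to2^\omega$, together with the splitting index $\iota(\{\alpha,\beta\})=\Delta(c(\alpha),c(\beta))$ (the least coordinate where $c(\alpha)$ and $c(\beta)$ differ); and a pairing bijection $\langle\cdot,\cdot\rangle\colon\omega\times\omega\to\omega$. Now define, for a pair lying inside a single piece $Q_k$,
\[
F(\{\alpha,\beta\})=\big\langle\,\iota(\{\alpha,\beta\}),\;g_k\big(\iota(\{\alpha,\beta\})\big)\,\big\rangle,
\]
and let $F$ be $0$ on pairs meeting two different pieces. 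The crucial design feature is that \emph{every} value of $F$ on a same-piece pair decodes to a genuine element $(n,g_k(n))$ of the graph of $g_k$, where $k$ is read off from the piece.

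To exploit a putative weakly universal embedding $(h,e)$ I would argue as follows. The map $e\colon\omega\to\omega$ is a real, hence $e\in V[G\restriction B_e]$ for some countable $B_e$; set $M_e:=V[G\restriction(A\cup B_e)]$, so $U,e\in M_e$. Because $\{\eta_k\}$ has size $\aleph_1$ while $B_e$ is countable, I may pick $k$ with $\eta_k\notin B_e$, whence $s_k\notin M_e$. The piece $Q_k$ is uncountable, so $h[Q_k]$ is uncountable, and by Baumgartner's consequence it contains a countably infinite ground model set $S'\in V$. Now work inside $M_e$: since $U,e\in M_e$ and $S'\in V\subseteq M_e$, the set
\[
f:=\big\{\operatorname{decode}\!\big(e(U(\delta,\delta'))\big)\ \big|\ \{\delta,\delta'\}\in[S']^2\big\}
\]
is an element of $M_e$. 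But for $\delta=h(\beta),\ \delta'=h(\beta')\in S'\subseteq h[Q_k]$ one has $\beta,\beta'\in Q_k$, so the embedding equation $F(\{\beta,\beta'\})=e(U(\delta,\delta'))$ together with the design feature shows that each such value decodes into the graph of $g_k$; hence $f\subseteq g_k$. Finally $f$ is \emph{infinite}: the preimage $h^{-1}[S']\subseteq Q_k$ is infinite, $c$ is injective, and for an infinite set of distinct reals the splitting indices $\iota$ are unbounded, so $f$ has infinitely many distinct first coordinates. Thus $f$ is an infinite partial function contained in $g_k=\pi(s_k)$ and lying in $M_e$, contradicting the Note since $s_k\notin M_e$.

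The hard part, and the reason for the two refinements above, is controlling the order of quantifiers: $F$ must be fixed before the adversary chooses $(h,e)$, yet the extraction has to be carried out in a model that still omits the coded Sacks real. Confining the relevant pairs to a single piece $Q_k$ keeps the coordinate $\eta_k$ \emph{constant} over the extraction (so $f$ lands inside one graph $g_k$) while letting the independent splitting index $\iota$ vary (so $f$ is genuinely infinite); and spreading $\aleph_1$ many fresh coordinates guarantees that some $\eta_k$ escapes the countable support $B_e$ of the late-chosen $e$. Once these are in place, the only remaining ingredients are the routine factoring facts for $\mathbb{S}^{\omega_2}$ (reals captured on countable supports, subsets of $\omega_1$ on supports of size $\aleph_1$), which I would invoke without detailed proof.
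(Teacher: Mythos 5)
Your proposal is correct, and it follows the same basic strategy as the paper's proof: encode the functions $\pi(s)$ of Sacks reals lying at coordinates beyond the support of $U$ into the values of $F$, capture $U$ and $e$ in a small intermediate model missing one of those coordinates, use Baumgartner's ground-model-set property to extract inside that model an infinite partial function contained in some $\pi(s_k)$, and contradict the Note. However, your implementation differs from the paper's in two substantive ways, and the differences are instructive. First, your coding gadget is \emph{self-indexing}: on each uncountable block $Q_k$ the value $F(\{\alpha,\beta\})$ is the pair $\langle\iota,g_k(\iota)\rangle$, so any pair drawn from $h[Q_k]$ decodes to a correct entry of the graph of $g_k$ without knowing which pair it came from. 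Consequently you only need Baumgartner's property for the \emph{image} $h[Q_k]$ (an infinite ground-model subset $S'$ suffices), whereas the paper indexes the coded real $d_\delta$ by a fixed enumeration of $\delta$ and therefore must apply the stick principle to the \emph{graph} of $h$, obtaining a ground-model partial injection $g\subseteq h$ so that the values $h(\delta_n)$ are available in the small model. Second, you work semantically in $V[G]$ and invoke the fact that every real of the side-by-side extension lies in $V[G\restriction B]$ for some countable $B$; the paper instead works with names and replaces this by taking an $(M,\mathbb{S}^{\omega_2})$-generic condition, which forces $\dot e[G]$ into $V[G_X]$ for $X=\beta\cup(M\cap\omega_2)$. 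Be aware that your capture fact is not a mere consequence of product factoring: it requires a fusion argument (nice-name counting only bounds supports by $\aleph_1$, since the product is not ccc). It is nonetheless a standard property of this model, so this is a presentational caveat rather than a gap; both routes ultimately rest on fusion-based facts of comparable depth.
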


\begin{proof}
Let $p_{0}\in\mathbb{S}^{\omega_{2}}$ and $\dot{U}$ such that $p_{0}%
\Vdash``\dot{U}:\left[  \omega_{1}\right]  ^{2}\longrightarrow\omega
\textquotedblright.$ Since the product of Sacks forcing has the $\omega_{2}%
$-chain condition, we may find $\omega_{1}\leq\beta<\omega_{2}$ such that
$p_{0}\in\mathbb{S}^{\beta}$ and $\dot{U}$ is a $\mathbb{S}^{\beta}$-name.
Given $\alpha<\omega_{1},$ let $\dot{d}_{\alpha}$ be name for $\pi\left(
\dot{r}_{\beta+\alpha}\right)  $ where $\dot{r}_{\beta+\alpha}$ is the name
for the $\left(  \beta+\alpha\right)  $-generic real. For every infinite
$\alpha<\omega_{1},$ we fix an enumeration $\alpha=\left\{  \alpha_{n}\mid
n\in\omega\right\}  .$

\qquad\ \ 

If $G\subseteq\mathbb{S}^{\omega_{2}}$ is a generic filter, in $V\left[
G\right]  $ we define a function $F:\left[  \omega_{1}\right]  ^{2}%
\longrightarrow\omega$ as follows: given $\omega\leq\alpha<\omega_{1}$ we
define $F\left(  \alpha_{n},\alpha\right)  =d_{\alpha}\left(  n\right)  .$ Let
$\dot{F}$ be a name for $F$ and let $\dot{h}$ be a $\mathbb{S}^{\omega_{2}}%
$-name for an injective function from $\omega_{1}$ to $\omega_{1}$ and
$\dot{e}$ be a $\mathbb{S}^{\omega_{2}}$-name for a function from $\omega$ to
$\omega.$ We will see that we can find an extension $q$ of $p_{0}$ that forces
that $(\dot{h},\dot{e})$ is not a $\left(  1,\omega_{1}\right)  $-embedding of
$\dot{F}$ in $\dot{U}.$

\qquad\ \ \ 

We can first find $p_{1}\leq p_{0}$ and a ground model injective function
$g:S\longrightarrow\omega_{1}$ such that $p_{1}\Vdash``g\subseteq\dot
{h}\textquotedblright$ where $S\in\left[  \omega_{1}\right]  ^{\omega}$ (this
is possible since the stick principle holds in the Sacks side-by-side model,
witnessed by the ground model countable sets). Let $M$ be a countable
elementary submodel such that $p_{1},\dot{U},\beta,\dot{F},g,\dot{h},\dot
{e}\in M.$ Let $q\leq p_{1}$ be a $\left(  M,\mathbb{S}^{\omega_{2}}\right)
$-generic condition. We claim that $q$ forces that $(\dot{h},\dot{e})$ is not
an $\left(  1,\omega_{1}\right)  $-embedding of $\dot{F}$ in $\dot{U}.$ Assume
this is not the case, so there is $q_{1}\leq q$ that forces that $(\dot
{h},\dot{e})$ is an $\left(  1,\omega_{1}\right)  $-embedding of $\dot{F}$ in
$\dot{U}.$

\qquad\ \ 

Let $G\subseteq\mathbb{S}^{\omega_{2}}$ be a generic filter such that
$q_{1}\in G.$ Let $X=\beta\cup\left(  M\cap\omega_{2}\right)  $ and define
$G_{X}$ to be the restriction of $G$ to $\mathbb{S}^{X}.$ Since $q_{1}$ is a
$\left(  M,\mathbb{S}^{\omega_{2}}\right)  $-generic condition, it follows
that $\dot{U}\left[  G\right]  ,\dot{e}\left[  G\right]  \in V\left[
G_{X}\right]  .$ Fix $\delta\in\omega_{1}$ such that $S\subseteq\delta$ and
$\beta+\delta\notin X,$ let $A=\left\{  n\in\omega\mid\delta_{n}\in S\right\}
.$ For every $\alpha\in\omega_{1},$ we define $f_{\alpha}:A\longrightarrow
\omega$ the function given by $f_{\alpha}\left(  n\right)  =\dot{e}\left[
G\right]  \left(  \dot{U}\left[  G\right]  \left(  g\left(  \delta_{n}\right)
,\alpha\right)  \right)  $ and note that $f_{\alpha}\in V\left[  G_{X}\right]
$ for every $\alpha\in\omega_{1}.$ Let $\alpha\in\omega_{1}$ such that
$\dot{h}\left[  G\right]  \left(  \delta\right)  =\alpha.$ Since $(\dot
{h}\left[  G\right]  ,\dot{e}\left[  G\right]  )$ is forced to be an $\left(
1,\omega_{1}\right)  $-embedding, if $n\in A$ then we have the following:

\qquad\ \ \ \ %

\begin{tabular}
[t]{lll}%
$d_{\delta}\left(  n\right)  $ & $=$ & $\dot{F}\left[  G\right]  \left(
\delta_{n},\delta\right)  $\\
& $=$ & $\dot{e}\left[  G\right]  \left(  \dot{U}\left[  G\right]  \left(
\dot{h}\left(  \delta_{n}\right)  ,\dot{h}\left(  \delta\right)  \right)
\right)  $\\
& $=$ & $\dot{e}\left[  G\right]  \left(  \dot{U}\left[  G\right]  \left(
g\left(  \delta_{n}\right)  ,\alpha\right)  \right)  $\\
& $=$ & $f_{\alpha}\left(  n\right)  $%
\end{tabular}

\qquad\ \ \bigskip

Hence $f_{\alpha}\subseteq d_{\delta},$ but this is a contradiction since
$r_{\beta+\delta}\notin V\left[  G_{X}\right]  .$
\end{proof}

\subsection{The Cohen model}

The Cohen model is the model obtained after adding $\omega_{2}$-Cohen reals
with finite support to a model of the Generalized Continuum Hypothesis. We
will show that there are no $\left(  1,\omega_{1}\right)  $-weakly universal
graphs in the Cohen model.

\begin{lemma}
If $U:\left[  \omega_{1}\right]  ^{2}\longrightarrow\omega$ then $U$ is not
$\left(  1,\omega_{1}\right)  $-weakly universal after adding $\omega_{2}$
Cohen reals.
\end{lemma}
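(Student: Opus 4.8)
The plan is to mimic the strategy of the side-by-side Sacks proof, but adapted to the finite-support product of Cohen forcing $\mathbb{C}_{\omega_2}$. The key structural fact about Cohen forcing that replaces the Baumgartner/$\clubsuit$ ingredient is that $\mathbb{C}_{\omega_2}$ has the countable chain condition and is a finite-support product, so every real (and every countable object) in the extension appears after only countably many coordinates, and ground-model reals remain a suitable reservoir. The plan is: given $p_0$ and a name $\dot U$ for a candidate function, first use the ccc and the $\omega_2$-size of the product to find a ``small'' set of coordinates $\beta$ such that $\dot U$ is a $\mathbb{C}_\beta$-name, then reserve a fresh block of Cohen coordinates past $\beta$ to build a counterexample function $F$.

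First I would define, exactly as in the side-by-side case, names $\dot d_\alpha = \pi(\dot r_{\beta+\alpha})$ from the reserved generic reals using the function $\pi$ from the first lemma, and set $F(\alpha_n,\alpha)=d_\alpha(n)$ for each infinite $\alpha$ relative to a fixed enumeration $\alpha=\{\alpha_n\mid n\in\omega\}$. Given names $\dot h,\dot e$ for a purported embedding, the goal is to produce $q\le p_0$ forcing that $(\dot h,\dot e)$ fails. The crucial preliminary step is to find $p_1\le p_0$ and a \emph{ground-model} injection $g:S\to\omega_1$ with $S\in[\omega_1]^\omega$ such that $p_1\Vdash g\subseteq\dot h$. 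In the Cohen model this is available because CH holds in the ground model and every uncountable set in the extension reflects to a ground-model countable subset (the stick principle holds in the Cohen model); alternatively one can extract such a $g$ directly by a fusion/counting argument using ccc and a $\Delta$-system of conditions deciding $\dot h$ on a countable set.

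Next I would take a countable elementary submodel $M\ni p_1,\dot U,\beta,\dot F,g,\dot h,\dot e$ and choose an $(M,\mathbb{C}_{\omega_2})$-generic condition $q\le p_1$; since Cohen forcing is ccc, every condition is trivially $(M,\mathbb{C}_{\omega_2})$-generic, which actually simplifies this step relative to the Sacks case. Setting $X=\beta\cup(M\cap\omega_2)$ and letting $G_X$ be the restriction of a generic $G$ (with $q_1\le q$ forcing $(\dot h,\dot e)$ to be an embedding) to $\mathbb{C}^X$, genericity over $M$ guarantees $\dot U[G],\dot e[G]\in V[G_X]$. Then I would pick $\delta$ with $S\subseteq\delta$ and $\beta+\delta\notin X$, put $A=\{n\mid\delta_n\in S\}$, and define $f_\alpha(n)=\dot e[G](\dot U[G](g(\delta_n),\alpha))$ for $\alpha=\dot h[G](\delta)$, noting $f_\alpha\in V[G_X]$. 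The embedding equations force $f_\alpha\subseteq d_\delta=\pi(r_{\beta+\delta})$, an infinite partial function, so by the first lemma $r_{\beta+\delta}$ is definable from $f_\alpha$ and hence lies in $V[G_X]$ — contradicting $\beta+\delta\notin X$.

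The main obstacle I expect is the extraction of the ground-model injection $g$ with $p_1\Vdash g\subseteq\dot h$: unlike the Sacks case where Baumgartner's $\clubsuit$ is quoted directly, here one must justify why a ccc, finite-support Cohen extension still guarantees that the range of $\dot h$ on some countable ground-model set is itself pinned down by a ground-model function. I would handle this by verifying the stick principle in the Cohen model (it follows from ground-model CH since Cohen forcing adds no new countable sets of ordinals in an essential way beyond ccc reflection), and by a careful bookkeeping argument showing that deciding $\dot h$ on countably many points only involves countably many coordinates, so that after a suitable restriction one recovers a genuine ground-model partial injection agreeing with $\dot h$. Everything downstream of obtaining $g$ is essentially identical to the side-by-side argument, with the single point of care being that the reserved real $r_{\beta+\delta}$ truly lies outside $V[G_X]$, which holds precisely because $\beta+\delta\notin X$ and Cohen reals at distinct coordinates are mutually generic.
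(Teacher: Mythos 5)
Your plan transplants the side-by-side Sacks argument, and it founders exactly at the step you flag as the ``main obstacle'': producing a single condition $p_{1}\leq p_{0}$ and an \emph{infinite ground-model} partial injection $g:S\longrightarrow\omega_{1}$ with $p_{1}\Vdash``g\subseteq\dot{h}\textquotedblright$. Your justification --- that the stick principle holds in the Cohen model --- is false. If $\left\{ S_{\alpha}\mid\alpha<\omega_{1}\right\}$ is any family of countable subsets of $\omega_{1}$ in the extension, then by the ccc and finite supports it lies in $V\left[ G_{A}\right]$ for some $A\in\left[ \omega_{2}\right]^{\omega_{1}}$; choosing coordinates $\eta_{\xi}\in\omega_{2}\setminus A$ for $\xi<\omega_{1}$ and letting $X=\left\{ \xi\mid c_{\eta_{\xi}}\left( 0\right) =0\right\}$, one gets an uncountable set containing no $S_{\alpha}$, because for each fixed $\alpha$ it is dense in the tail forcing over $V\left[ G_{A}\right]$ (whose conditions are finite, while $S_{\alpha}$ is infinite) to force some $\xi\in S_{\alpha}$ out of $X$. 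The same finiteness kills your extraction step directly, with no detour through stick: consider the name $\dot{h}$ given by $\dot{h}\left( \alpha\right) =\omega\cdot\alpha+\dot{c}_{\alpha}\left( 0\right)$. This is forced to be injective, yet a condition decides $\dot{h}\left( \alpha\right)$ only if $\alpha$ belongs to its finite domain, so \emph{no} condition forces $g\subseteq\dot{h}$ for any infinite ground-model $g$. Your fallback of a ``fusion/counting argument'' cannot rescue this: Cohen forcing has no fusion, and the $\Delta$-system produces $\omega_{1}$ conditions each deciding one value of $\dot{h}$, with no single condition below infinitely many of them. This is precisely the point where the side-by-side Sacks proof uses a property special to Sacks forcing (every uncountable set in that extension contains an infinite ground-model set, by Baumgartner's result), a property that Cohen forcing provably lacks.

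The paper's proof is built around this obstruction rather than against it: it never decides infinitely many values of $\dot{h}$ at once. It takes $F\left( \alpha,\beta\right) =\left\vert \dot{c}_{\alpha}\wedge\dot{c}_{\beta}\right\vert$, conditions $p_{\alpha}\Vdash``\dot{h}\left( \alpha\right) =\delta_{\alpha}\textquotedblright$ forming a $\Delta$-system with root $R$, and $\alpha,\beta$ outside a countable elementary submodel $M$; since $U$ is a ground-model function, $m=U\left( \delta_{\alpha},\delta_{\beta}\right)$ is an actual integer, so a condition $\overline{q}\in M$ below the root decides $\dot{e}\left( m\right) =i$, and then $\overline{q},p_{\alpha},p_{\beta}$ are amalgamated while writing a common stem $t$ with $\left\vert t\right\vert >i$ into coordinates $\alpha$ and $\beta$, forcing $\dot{F}\left( \alpha,\beta\right) >i=\dot{e}(U(\dot{h}\left( \alpha\right) ,\dot{h}\left( \beta\right) ))$. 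For what it is worth, your skeleton can be repaired, but only by abandoning the ground-model requirement: with $\dot{h},\dot{e}\in M$, $\delta=M\cap\omega_{1}$ and $X=\beta\cup\left( M\cap\omega_{2}\right)$, the ccc gives that maximal antichains in $M$ deciding $\dot{h}\left( \alpha\right)$ for $\alpha<\delta$ are countable, hence subsets of $M$, so $\dot{h}\left[ G\right] \upharpoonright\delta$ and $\dot{e}\left[ G\right]$ lie in $V\left[ G_{X}\right]$; then $f\left( n\right) =\dot{e}\left[ G\right] (U(\dot{h}\left[ G\right] \left( \delta_{n}\right) ,\dot{h}\left[ G\right] \left( \delta\right) ))$ belongs to $V\left[ G_{X}\right]$ (the single ordinal $\dot{h}\left[ G\right] \left( \delta\right)$ enters only as a parameter) and equals $d_{\delta}=\pi\left( r_{\beta+\delta}\right)$, contradicting that $r_{\beta+\delta}$ is Cohen-generic over $V\left[ G_{X}\right]$ since $\beta+\delta\notin X$. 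That repaired argument works, but it is not the argument you wrote: a ground-model $g$ with $p_{1}\Vdash``g\subseteq\dot{h}\textquotedblright$ is unattainable in the Cohen extension.
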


\begin{proof}
Let $U:\left[  \omega_{1}\right]  ^{2}\longrightarrow\omega.$ We define the
function $H:\left[  \omega^{\omega}\right]  ^{2}\longrightarrow\omega$ given
by $H\left(  x,y\right)  =\left\vert x\wedge y\right\vert $ (where $x\wedge y$
denotes the largest initial segment which both $x$ and $y$ have in common).
Let $\dot{c}_{\alpha}$ be the name for the $\alpha$-Cohen real. Let $\dot{F}$
be a name of a function from $\left[  \omega_{1}\right]  ^{2}$ to $\omega$
such that $\mathbb{C}_{\omega_{2}}\Vdash``\dot{F}\left(  \alpha,\beta\right)
=H\left(  \dot{c}_{\alpha},\dot{c}_{\beta}\right)  \textquotedblright.$

\qquad\ 

Let $p\in\mathbb{C}_{\omega_{2}},$ $\dot{h}$ a name for an injective function
from $\omega_{1}$ to $\omega_{1}$ and $\dot{e}$ a name for a function from
$\omega$ to $\omega.$ We must find $q\leq p$ and $\alpha,\beta\in\omega_{1}$
such that $q\Vdash``\dot{F}\left(  \alpha,\beta\right)  \neq\dot{e}U(\dot
{h}\left(  \alpha\right)  ,\dot{h}\left(  \beta\right)  )\textquotedblright.$
For every $\alpha<\omega_{1}$ we find $p_{\alpha}\leq p$ and $\delta_{\alpha}$
such that $p_{\alpha}\Vdash``\dot{h}\left(  \alpha\right)  =\delta_{\alpha
}\textquotedblright$ and $\alpha\in dom\left(  p_{\alpha}\right)  .$ By the
usual pruning arguments, we may find $X\in\left[  \omega_{1}\right]
^{\omega_{1}},$ $R\in\left[  \omega_{2}\right]  ^{<\omega},$ $\overline{p}%
\in\mathbb{C}_{\omega_{2}}$ and $s\in\omega^{<\omega}$ such that the following holds:

\begin{enumerate}
\item $\{dom\left(  p_{\alpha}\right)  \mid\alpha\in X\}$ forms a $\Delta
$-system with root $R.$

\item $p_{\alpha}\upharpoonright R=\overline{p}$ for every $\alpha\in X.$

\item $\alpha\notin R$ for every $\alpha\in X.$

\item $p\left(  \alpha\right)  =s$ for every $\alpha\in X.$
\end{enumerate}

\qquad

It is clear that $\left\{  p_{\alpha}\mid\alpha\in\omega_{1}\right\}  $ is a
centered set (any finite set of conditions are compatible). Let $M$ be a
countable elementary submodel such that $\overline{p},\left\{  p_{\alpha}%
\mid\alpha\in\omega_{1}\right\}  ,$ $\dot{e},R\in M.$ Since $M\cap\omega_{2}$
is countable, we may find $\alpha,\beta\in X\setminus M$ such that $\alpha
\neq\beta$ and $dom\left(  p_{\alpha}\right)  \cap M=$ $dom\left(  p_{\beta
}\right)  \cap M=R.$ Let $m=U\left(  \delta_{\alpha},\delta_{\beta}\right)  .$
We may now find $\overline{q}$ and $i$ such that the following conditions hold:

\begin{enumerate}
\item $\overline{q}\in M$ and $\overline{q}\leq\overline{p}.$

\item $\overline{q}\Vdash``\dot{e}\left(  m\right)  =i\textquotedblright.$
\end{enumerate}

\qquad\ \ 

This is possible since $\overline{p},\dot{e}\in M$. Let $t\in\omega^{<\omega}$
be such that $\left\vert t\right\vert >i$ and $s\subseteq t.$ We now define a
condition $q$ as follows:\qquad\ \ 

\qquad\ \ \ \ \qquad\ \ \ \ \ \ \ \ \ 

$\hfill q\left(  \xi\right)  =\left\langle
\begin{array}
[c]{ccl}%
\overline{q}\left(  \xi\right)  &  & \text{if }\xi\in dom\left(  \overline
{q}\right) \\
p_{\alpha}\left(  \xi\right)  &  & \text{if }\xi\in dom\left(  p_{\alpha
}\right)  \setminus dom\left(  \overline{q}\right)  \text{ and }\xi\neq
\alpha\\
p_{\beta}\left(  \xi\right)  &  & \text{if }\xi\in dom\left(  p_{\beta
}\right)  \setminus dom\left(  \overline{q}\right)  \text{ and }\xi\neq\beta\\
t &  & \text{if }\xi=\alpha\text{ or }\xi=\beta
\end{array}
\right.  \hfill$

\qquad\ \ \ \qquad\ \ \ \qquad\ \ \ \qquad\ \ \bigskip

Note that this is possible since $dom\left(  \overline{q}\right)  \subseteq
M.$ Clearly $q\Vdash``\dot{F}\left(  \alpha,\beta\right)  >i\textquotedblright%
$ and $q\Vdash``\dot{e}(U(\dot{h}\left(  \alpha\right)  ,\dot{h}\left(
\beta\right)  ))=i\textquotedblright$ so $q\Vdash``\dot{F}\left(  \alpha
,\beta\right)  \neq\dot{e}(U(\dot{h}\left(  \alpha\right)  ,\dot{h}\left(
\beta\right)  ))\textquotedblright.$
\end{proof}

\qquad\ \ 

Since Cohen forcing has the countable chain condition, we conclude the following:

\begin{proposition}
There are no $\left(  1,\omega_{1}\right)  $-weakly universal graphs in the
Cohen model.\qquad
\end{proposition}

\subsection{The Sacks model}

The proof that there are no $\left(  1,\omega_{1}\right)  $-weakly universal
graph in the Side by Side Sacks model uses that the stick principle holds in
such model. It is then natural to wonder if the stick principle is enough to
get the non-existence of such graphs (under the failure of the Continuum
Hypothesis). Moreover, the stick principle already forbids the existence of
some universal graphs, as the following result of Shelah and Stepr\={a}ns shows:

\begin{proposition}
[\cite{UniversalGraphsandFunctionsonw1}]$_{\mid}^{\bullet}+\mathfrak{c}%
>\omega_{1}$ implies that there is no universal function $U:\left[  \omega
_{1}\right]  ^{2}\longrightarrow2.$
\end{proposition}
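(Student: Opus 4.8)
The statement to prove is the result of Shelah and Stepr\={a}ns:
\[
{}_{\mid}^{\bullet} + \mathfrak{c} > \omega_1 \implies \text{there is no universal function } U:[\omega_1]^2 \longrightarrow 2.
\]
Since this is cited as a known result from \cite{UniversalGraphsandFunctionsonw1}, my plan is to reconstruct a direct combinatorial argument. The idea is to use the stick principle to produce, for any candidate universal graph $U$, a specific target graph $F:[\omega_1]^2 \longrightarrow 2$ that $U$ cannot reduce to — exploiting the fact that a \emph{universal} graph allows no post-composition $e$, so the reduction must preserve the two color values exactly (up to a fixed permutation of $\{0,1\}$).

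\textbf{Step 1: Extract the counting obstruction.} First I would observe that the key difference from the weakly universal case is rigidity: an embedding $h$ of $F$ into a universal $U$ must satisfy $F(\alpha,\beta)=U(h(\alpha),h(\beta))$ on the nose. Fix an injective $h$ and consider the columns: for a fixed ordinal $\nu$ in the range of $h$, the map $\xi \mapsto U(\xi,\nu)$ partitions the ordinals below $\nu$ into at most two classes ($0$ and $1$). Using $\mathfrak{c}>\omega_1$, I would aim to encode $\mathfrak{c}$-many distinct ``column types'' into $F$ that $U$ cannot realize, since $U$ lives on $\omega_1$ and has only $\omega_1$-many columns to offer.

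\textbf{Step 2: Use stick to defeat every $h$ simultaneously.} The stick family $\{S_\alpha \mid \alpha\in\omega_1\}\subseteq[\omega_1]^\omega$ guarantees that for every uncountable $X$ there is an $\alpha$ with $S_\alpha\subseteq X$. The plan is to build $F$ so that along each stick set $S_\alpha$ the restriction $F\!\upharpoonright\!S_\alpha$ codes one of continuum-many prescribed patterns. Given a purported embedding $(h, \mathrm{id})$, the range $h[\omega_1]$ is uncountable, so some $S_\alpha$ lands inside a preimage witnessing a fixed column behavior of $U$; but $U$ can assign only $\omega_1$-many column-restriction patterns on countable sets, whereas $F$ was arranged to require all $\mathfrak{c}>\omega_1$ of them. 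This pigeonhole mismatch is what rules out universality.

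\textbf{The main obstacle.} The hardest part is the bookkeeping in Step 2: I must design $F$ so that the continuum-many required patterns are genuinely unavoidable under an \emph{arbitrary} injective $h$, not merely the identity. The difficulty is that $h$ may scramble the stick sets badly, so the coding must be robust under passing to the countable set $h^{-1}[S_\alpha]$ (or $h[S_\alpha]$) rather than to $S_\alpha$ itself. I would handle this by a diagonalization over a fixed enumeration $\{e_\xi \mid \xi<\mathfrak{c}\}$ of potential column-types of $U$ on countable sets, assigning to each stick set a type not representable by $U$ on any countable subset of its range; the stick property then forces a clash for every $h$ at once. This is precisely where $\mathfrak{c}>\omega_1$ is essential, since with $\mathfrak{c}=\omega_1$ the counts match and the construction collapses — consistent with the fact that \textsf{CH} yields universal graphs.
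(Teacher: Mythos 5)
You should know at the outset that the paper does not prove this proposition: it is quoted from \cite{UniversalGraphsandFunctionsonw1}, so your sketch can only be measured against the technique the paper itself deploys in the analogous situation (its side-by-side Sacks argument). Measured that way, there is a genuine gap, and it sits exactly where you locate ``the main obstacle.'' First, the pigeonhole of Steps 1--2 does not typecheck: a single target $F$ has only $\omega_{1}$ many columns, so it cannot ``require'' $\mathfrak{c}$ many column types; to bring $\mathfrak{c}$ into play you must work with a family $\left\{ F_{x}\mid x\in2^{\omega}\right\}  $ and recover $x$ from what the stick family captures about the embedding $h_{x}$. Second, and fatally, stick applied to subsets of $\omega_{1}$ captures only a set $S_{\alpha}\subseteq h_{x}\left[  \omega_{1}\right]  $, never the map $h_{x}$ itself, so the only data transferred from a column of $F_{x}$ to a column of $U$ is its pattern on $h_{x}^{-1}\left[  S_{\alpha}\right]  $ \emph{up to an unknown bijection}. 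A function from a countably infinite set into $2$ is determined, up to a bijection of its domain, by the two cardinalities $\left\vert f^{-1}\left(  0\right)  \right\vert $ and $\left\vert f^{-1}\left(  1\right)  \right\vert $, so there are only countably many ``column types'' in this sense; they cannot separate $\mathfrak{c}$ many codes, no matter how Step 2 is arranged. Nor can one rigidify columns under restriction (three or more functions into $2$ cannot pairwise agree only finitely), nor switch to isomorphism types of $F_{x}\upharpoonright\left[  T\right]  ^{2}$ (Ramsey's theorem gives every $F_{x}$ an infinite clique or independent set, so two distinct codes always share a countable square type). Finally, the proposed diagonalization against an enumeration $\left\{  e_{\xi}\mid\xi<\mathfrak{c}\right\}  $ has no room: the family to be avoided has the same cardinality $\mathfrak{c}$ as the space of available patterns, and there are only $\omega_{1}$ stick sets to which patterns could be assigned.

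The missing idea---visible in this very paper, where the side-by-side Sacks proof finds a ground-model injection $g\subseteq\dot{h}$ with countable infinite domain---is to apply the stick principle to the \emph{graph of the embedding} rather than to its range: transporting the stick family through a bijection between $\omega_{1}$ and $\omega_{1}\times\omega_{1}$, one obtains a fixed list of $\omega_{1}$ many countable partial injections such that every injective $h:\omega_{1}\longrightarrow\omega_{1}$ extends one of them. Once a concrete $g\subseteq h_{x}$ with domain $D$ is captured, one knows $F_{x}\left(  \gamma,\gamma^{\prime}\right)  =U\left(  g\left(  \gamma\right)  ,g\left(  \gamma^{\prime}\right)  \right)  $ for all $\gamma,\gamma^{\prime}\in D$: the identification is now by a \emph{known} map, the obstructions above evaporate, and the remaining work is to construct $\left\{  F_{x}\mid x\in2^{\omega}\right\}  $ so that $F_{x}\upharpoonright\left[  D\right]  ^{2}$ determines $x$ for each of the $\omega_{1}$ many relevant domains $D$; then $x\mapsto\alpha_{x}$ (the index of the captured injection) is injective, giving $\mathfrak{c}\leq\omega_{1}$. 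Your sketch contains no substitute for this capturing step, and without it the column-type approach collapses for the reasons above.
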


By the \emph{Sacks model} we mean a model obtained by forcing with
$\mathbb{S}_{\omega_{2}}$ over a model of the Generalized Continuum
Hypothesis. In this section, we will prove that there is a $\left(
1,\omega_{1}\right)  $-weakly universal graph in the Sacks model. The
following is a result of Mildenberger:

\begin{proposition}
[\cite{Heikeclub}]$\clubsuit$ holds in the Sacks model.
\end{proposition}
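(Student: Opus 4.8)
The plan is to build a $\clubsuit$-sequence directly in the extension, using the $\omega_2$ Sacks reals, with a ground-model guessing device to organise the recursion. I would use the standard facts that $\mathbb{S}_{\omega_2}$ is proper, $\omega^\omega$-bounding, has the Sacks property, and---since the ground model satisfies \textsf{CH}---satisfies the $\omega_2$-cc; and that GCH yields $\Diamond_{\omega_2}$ in $V$ (from $2^{\omega_1}=\omega_2$), which will serve as bookkeeping. The first reduction localises the sets that must be guessed: if $G$ is generic and $X\in[\omega_1]^{\omega_1}\cap V[G]$, a nice name for $X$ uses $\omega_1$ antichains, each of size $\le\omega_1$ by the $\omega_2$-cc, together with countable supports, so it is supported on at most $\omega_1$ coordinates; as $\mathrm{cf}(\omega_2)>\omega_1$ these are bounded by some $\xi<\omega_2$ and $X\in V[G_\xi]$. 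Likewise every club of $\omega_1$ in $V[G]$ appears in some intermediate model. Thus it suffices to produce in $V[G]$ a single sequence $\langle S_\alpha\mid\alpha\in\mathrm{LIM}(\omega_1)\rangle$ of unbounded sets that stationarily guesses every uncountable subset of $\omega_1$ living in some $V[G_\xi]$.

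The reason the argument is delicate is that no fixed ground-model sequence can work. Given a name $\dot X$ for a new uncountable set, a name $\dot C$ for a club, a condition $p$, and a countable $M\prec H(\theta)$ containing these parameters with $\delta:=M\cap\omega_1$, properness yields an $(M,\mathbb{S}_{\omega_2})$-generic master condition $q\le p$ forcing $\delta\in\dot C$ and forcing $\dot X\cap\delta$ to be unbounded in $\delta$ (by elementarity); but nothing forces a pre-chosen $S_\delta\in V$ to satisfy $S_\delta\subseteq\dot X$. Hence the sequence must be assembled in $V[G]$ from the generic data, and the combinatorial target is that the map $X\mapsto\{\alpha:S_\alpha\subseteq X\}$ has stationary image for every uncountable $X$, with each of the $\omega_2$ sets permitted its own stationary block of indices.

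I would obtain this through an iterable single-step lemma. Over an intermediate model $W=V[G_\xi]$, and below a master condition for a countable $M$ with $\delta=M\cap\omega_1$, the Sacks property lets one read off, along a fusion, enough of $\dot X\cap\delta$ to select within it an unbounded set and commit it as $S_\delta$, while the same master condition pins $\delta\in\dot C$; the branching freedom of the fresh Sacks generic is exactly what makes the commitment $S_\delta\subseteq\dot X$ attainable rather than merely consistent. Phrasing the relevant guessing property so that it is of the iterable ($\sqsubseteq$-)type handled by Shelah's preservation theorems for countable support iterations of proper forcing then lifts the single step to $\mathbb{S}_{\omega_2}$, and the $\Diamond_{\omega_2}$-bookkeeping anticipates the names $\dot X$ so that, along the $\omega_2$-recursion, each uncountable set---at whatever stage it first appears---is assigned a stationary set of indices on which its guess is honoured.

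The step I expect to be the main obstacle is precisely this simultaneity inside the fusion: producing one master condition that both pins a club point $\delta$ and decides an unbounded $S_\delta\subseteq\dot X$, organised coherently enough that a single sequence serves all $\omega_2$ sets with each guessed stationarily. This is where the fine structure of Sacks forcing---the Sacks property together with the splitting/fusion calculus, rather than properness alone---is indispensable, and it is what separates the iterated Sacks model, where $\clubsuit$ survives, from the side-by-side product treated above.
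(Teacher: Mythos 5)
The paper does not actually prove this proposition: it is quoted as a theorem of Mildenberger \cite{Heikeclub}, and only its statement is used later. So your proposal has to be judged on its own merits, and it has genuine gaps. Your localization step ($\omega_2$-cc plus countable supports puts every uncountable $X\subseteq\omega_1$ and every club into some $V[G_\xi]$, $\xi<\omega_2$) is correct, but it undercuts your own central design decision. A $\clubsuit$-sequence is itself an object of size $\omega_1$ (an $\omega_1$-sequence of countable subsets of $\omega_1$), so by the very same argument any candidate sequence in $V[G]$ already lies in some intermediate $V[G_\xi]$, over which the remaining forcing is again an $\omega_2$-length countable support Sacks iteration. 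Hence ``assembling the sequence in $V[G]$ along an $\omega_2$-recursion with $\Diamond_{\omega_2}$-bookkeeping'' cannot evade the preservation problem: whatever you build is fixed at some stage $<\omega_2$ and must then guess all sets appearing later. Worse, the bookkeeping scheme is combinatorially incoherent as stated: there are only $\omega_1$ indices $\delta$, and any family of pairwise disjoint stationary subsets of $\omega_1$ has size at most $\omega_1$, so the $\omega_2$ many sets cannot each receive ``its own stationary block of indices.'' The only mechanism by which a sequence fixed in advance can guess names that do not yet exist is that $S_\delta$ depend only on data that recurs cofinally often --- e.g.\ the isomorphism type of a countable structure $(M,\in,\Vdash,p,\dot X)$, of which there are only $\omega_1$ many under \textsf{CH}. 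That anticipation-by-isomorphism-type device is exactly what the paper itself uses in its Sacks-model section (the family $\overline{M}$), and it is absent from your plan.

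The second gap is that the actual mathematical content --- the single-step master-condition lemma --- is never proved, as you yourself flag. Saying that ``the Sacks property lets one read off enough of $\dot X\cap\delta$ to commit an unbounded $S_\delta\subseteq\dot X$'' is precisely what fails naively for the iteration: in a fusion, elements of $\dot X$ are decided separately below each $(p)_\sigma$, and the fused condition need not force any single ground-model unbounded set into $\dot X$. In the side-by-side product this is rescued by pruning $\omega_1$ many conditions to a $\Delta$-system and amalgamating countably many of them with disjoint supports off the root (Baumgartner's argument, which is what the paper's product section relies on); that amalgamation is exactly what breaks for the iteration, where coordinates are names depending on earlier generics. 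Finally, the appeal to ``Shelah's preservation theorems for CS iterations'' is not legitimate here: there is no off-the-shelf preservation theorem for $\clubsuit$-type guessing, and the lack of one is precisely why the iterated case is a separate and harder theorem (Mildenberger's) than the product case. In effect your proposal restates the theorem as a two-part plan --- an iterable single-step guessing lemma plus an iteration theorem --- where neither part is established and the first is essentially the theorem itself.
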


\qquad\ \ 

In particular, we will be able to conclude that the existence of a $\left(
1,\omega_{1}\right)  $-weakly universal graph is consistent with $\clubsuit.$
As usual, if $T\subseteq2^{<\omega}$ is a tree, we denote by $\left[
T\right]  $ the set of all branches (i.e. maximal linearly order sets) through
$T.$ Given $f\in2^{\omega}$ and $T\subseteq2^{<\omega}$ a finite tree, we say
that $f\in^{\ast}\left[  T\right]  $ if there is $n\in\omega$ such that
$f\upharpoonright n\in\left[  T\right]  .$ If $f\in^{\ast}\left[  T\right]  ,$
we define by $f\upharpoonright T$ to be the unique $t\in2^{<\omega}$ such that
there is $n$ for which $t=$ $f\upharpoonright n\in\left[  T\right]  .$ For
this section, we fix $W$ as the set of all $\left(  T,f\right)  $ such that
$T\subseteq2^{<\omega}$ is a finite tree and $f:\left[  T\right]
\longrightarrow\omega.$ It is easy to see that $W$ is a countable set.

\qquad\ \ \ 

We will need some definition and lemmas regarding iterated Sacks forcing. The
following is based on \cite{SacksMiller} and \cite{LifeintheSacksModel}. If
$p\in\mathbb{S}$ and $s\in2^{<\omega}$ we define $p_{s}=\left\{  t\in p\mid
t\subseteq s\vee s\subseteq t\right\}  .$ Note that $p_{s}$ is a Sacks tree if
and only if $s\in p.$ By $supp\left(  p\right)  $ we will denote the support
of $p.$

\begin{definition}
Let $p\in\mathbb{S}_{\alpha},$ $F\in\left[  supp\left(  p\right)  \right]
^{<\omega}$ and $\sigma:F\longrightarrow2^{n}.$ We define $p_{\sigma}$ as follows:

\begin{enumerate}
\item $supp\left(  p_{\sigma}\right)  =supp\left(  p\right)  .$

\item Letting $\beta<\alpha$ the following holds:

\begin{enumerate}
\item $p_{\sigma}\left(  \beta\right)  =p\left(  \beta\right)  $ if
$\beta\notin F.$

\item $p_{\sigma}\left(  \beta\right)  =p\left(  \beta\right)  _{\sigma\left(
\beta\right)  }$ if $\beta\in F.$
\end{enumerate}
\end{enumerate}
\end{definition}

\qquad\ \ 

Similar to previous situation, $p_{\sigma}$ is not necessarily a condition of
$\mathbb{S}_{\alpha}.$ We will say that $\sigma:F\longrightarrow2^{n}$ is
\emph{consistent with }$p$ if $p_{\sigma}\in\mathbb{S}_{\alpha}.$ A condition
$p$ is $\left(  F,n\right)  $\emph{-determined }if for every $\sigma
:F\longrightarrow2^{n}$ either $\sigma$ is consistent with $p$ or there is
$\beta\in F$ such that $\sigma\upharpoonright\left(  F\cap\beta\right)  $ is
consistent with $p$ and $(p\upharpoonright\beta)_{\sigma\upharpoonright\left(
F\cap\beta\right)  }\Vdash``\sigma\left(  \beta\right)  \notin p\left(
\beta\right)  \textquotedblright.$

\qquad\ \ 

We say that $p\in\mathbb{S}_{\alpha}$ is \emph{determined }if for every
$F\in\left[  supp\left(  p\right)  \right]  ^{<\omega}$ and for every
$n\in\omega$ there are $G$ and $m$ such that the following holds:

\begin{enumerate}
\item $G\in\left[  supp\left(  p\right)  \right]  ^{<\omega}.$

\item $F\subseteq G.$

\item $n<m.$

\item $p$ is $\left(  G,m\right)  $-determined.
\end{enumerate}

\qquad\ \ \ \qquad\ \qquad\ \ \ 

The following result is well known:

\begin{lemma}
[\cite{SacksMiller}]For every $p\in\mathbb{S}_{\alpha}$ there is a determined
$q\leq p$.
\end{lemma}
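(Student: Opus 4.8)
The plan is to prove the lemma by a fusion argument, which is the standard technique for the countable support iteration of Sacks forcing. I would first isolate two auxiliary facts. The first is the standard \emph{fusion lemma}: there is a family of orderings $\le_{F,k}$ on $\mathbb{S}_{\alpha}$ (where $F$ is a finite subset of the support and $k\in\omega$) such that $q\le_{F,k}p$ means $q\le p$ and at every coordinate $\beta\in F$ the first $k$ splitting levels of the tree $p(\beta)$ are preserved; and if $p_{0}\ge p_{1}\ge\cdots$ with $p_{j+1}\le_{F_{j},j}p_{j}$, $F_{j}\subseteq F_{j+1}$ and $\bigcup_{j}F_{j}=\supp(p_{0})$, then the fusion $q=\bigwedge_{j}p_{j}$ is a condition with $q\le_{F_{j},j}p_{j}$ for all $j$. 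The second is the standard \emph{decision lemma}: for every statement $\phi$ of the forcing language, every $p$, every finite $F\subseteq\supp(p)$ and every $k$, there is $q\le_{F,k}p$ deciding $\phi$; this is proved by a finite induction over the coordinates of $F$, using at each single Sacks coordinate that one may decide a statement about the generic branch while keeping the first $k$ splitting nodes, pushing the remaining indecision to the coordinates below.

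Granting these, the heart is a single-step lemma: given $p$, fusion parameters $(F_{0},n_{0})$, and a target pair $(F,n)$ with $F\in\left[\supp(p)\right]^{<\omega}$ and $n\in\omega$, one can find $G\supseteq F$, $m>n$ and $q\le_{F_{0},n_{0}}p$ such that $q$ is $(G,m)$-determined. I would take $G=F$ and $m=n+1$, enumerate $F=\{\beta_{1}<\cdots<\beta_{\ell}\}$, and process the coordinates in increasing order. At stage $i$, for each of the finitely many partial patterns $\tau:(F\cap\beta_{i})\to2^{m}$ that are consistent with the current condition and each candidate node $s\in2^{m}$, I would apply the decision lemma to $(q\upharpoonright\beta_{i})_{\tau}$ to decide the statement ``$s\in q(\beta_{i})$'', doing so while preserving the first $m$ splitting levels at the coordinates of $F\cap\beta_{i}$ (and the first $n_{0}$ splitting levels at $F_{0}$), so that earlier decisions and consistencies are not disturbed. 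After finitely many such extensions the condition decides, for every $\sigma:F\to2^{m}$ and every $\beta_{i}$, whether $\sigma(\beta_{i})\in q(\beta_{i})$; walking through the coordinates then shows that each $\sigma$ is either consistent with $q$ or exhibits the first coordinate where membership is forced to fail, which is exactly $(F,m)$-determinacy.

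Finally I would assemble the full fusion. Using the countability of $\supp(p)$, I would fix an increasing exhaustion $F_{0}\subseteq F_{1}\subseteq\cdots$ with $\bigcup_{j}F_{j}=\supp(p)$, and a bookkeeping enumeration $\{(F^{j},n^{j}):j\in\omega\}$ of $\left[\supp(p)\right]^{<\omega}\times\omega$ arranged so that $n^{j}\le j$ and $F^{j}\subseteq F_{j+1}$; both constraints are easy to meet since each pair is admissible at infinitely many stages. Starting from $p_{0}=p$, at step $j$ I would apply the single-step lemma with parameters $(F_{j},j)$ to the requirement $(F^{j},n^{j})$, obtaining $G^{j}=F^{j}$, $m^{j}=n^{j}+1\le j+1$ and $p_{j+1}\le_{F_{j},j}p_{j}$ that is $(G^{j},m^{j})$-determined. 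Taking $q$ to be the fusion of the $p_{j}$ gives a condition below $p$, and for every $(F,n)$ the corresponding requirement is met at some stage. The step I expect to be the main obstacle is verifying that $(G^{j},m^{j})$-determinacy, once achieved at $p_{j+1}$, survives all later fusion steps and the passage to the limit. This is where the bookkeeping is designed to pay off: since $G^{j}\subseteq F_{j+1}\subseteq F_{k}$ and every later step is a $\le_{F_{k},k}$-extension with $k\ge j+1\ge m^{j}$, the first $m^{j}$ splitting levels at the coordinates of $G^{j}$ are frozen from stage $j+1$ onward; as these levels determine each tree up to height $m^{j}$, the membership of every length-$m^{j}$ node, the consistency of each $\sigma$, and the forcing of non-membership are all preserved in $q$, so $q$ is $(G^{j},m^{j})$-determined and hence determined.
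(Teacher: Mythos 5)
You should know at the outset that the paper does not prove this lemma at all: it is quoted from \cite{SacksMiller} as a known fact, so your sketch can only be measured against the standard fusion argument in the literature. Your outer architecture does match that argument (fusion orderings $\leq_{F,k}$, a single-step improvement, bookkeeping, a limit pass), and the closing observation is correct and is the best part of the write-up: since the $k$-th splitting level of a perfect tree lies at height at least $k$, freezing the first $m^{j}$ splitting levels at the coordinates of $G^{j}$ freezes each tree up to height $m^{j}$, and then consistency of patterns, forced non-membership, and hence $\left(  G^{j},m^{j}\right)  $-determinacy all pass from $p_{j+1}$ to the fusion.

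The core of the argument, however, is broken. Your ``decision lemma'' is false, and with it your single-step lemma, because you fix $G=F$ and $m=n+1$ in advance. Here is a counterexample in $\mathbb{S}_{2}$. Let $p\left(  0\right)  $ be the perfect tree whose first splitting node is $0^{100}$, and let $p\left(  1\right)  $ be the $\mathbb{S}$-name for the perfect tree whose stem is $i^{100}$, where $i=\dot{r}_{0}\left(  100\right)  $ is the bit chosen by the coordinate-$0$ generic at that splitting node. Suppose $q\leq p$ preserves even the first splitting level of $p\left(  0\right)  $, i.e.\ $q\leq_{\left\{  0\right\}  ,1}p$. Then $0^{100}$ still splits in $q\left(  0\right)  $, so $q\left(  0\right)  $ does not decide $\dot{r}_{0}\left(  100\right)  $; on the other hand, for $1\leq m\leq100$ the statement ``$1^{m}\in q\left(  1\right)  $'' is forced to be equivalent to ``$\dot{r}_{0}\left(  100\right)  =1$'' (every perfect subtree of $p\left(  1\right)  $ contains $1^{m}$ when the stem is $1^{100}$, and omits it otherwise). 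Taking $\sigma\left(  0\right)  =0^{m}$ and $\sigma\left(  1\right)  =1^{m}$, one checks $(q\upharpoonright1)_{\sigma\upharpoonright\left\{  0\right\}  }=q\left(  0\right)  _{0^{m}}=q\left(  0\right)  $, which forces neither $\sigma\left(  1\right)  \in q\left(  1\right)  $ nor $\sigma\left(  1\right)  \notin q\left(  1\right)  $. So no $q\leq_{\left\{  0\right\}  ,1}p$ is $\left(  \left\{  0,1\right\}  ,m\right)  $-determined for any $m\leq100$: your single-step lemma fails for every target level fixed beforehand, and so does the decision lemma it rests on. This is precisely why the paper's definition of \emph{determined} quantifies $G$ and $m$ existentially: the level $m$ at which determinacy is achieved cannot be prescribed in advance; it is discovered during the construction, after one has decided --- separately below each branch through the splitting nodes that the fusion protects, and in the iteration these splitting nodes are themselves only names --- where that splitting structure actually lies.

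Two further gaps. First, at each stage you extend the pairwise incompatible restricted conditions $(q\upharpoonright\beta_{i})_{\tau}$, one for each pattern $\tau$, but you never say how these separate extensions are merged back into a single condition; since the restrictions are incompatible, sequential extension is meaningless, and the standard amalgamation step that you omit is exactly what preserves the splitting skeleton (the individual decisions certainly do not, as the example above shows). Second, your bookkeeping enumerates $\left[  \supp\left(  p\right)  \right]  ^{<\omega}\times\omega$ once and for all, but deciding statements enlarges supports, and determinacy of the fusion $q$ concerns all $F\in\left[  \supp\left(  q\right)  \right]  ^{<\omega}$; the enumeration must dynamically exhaust $\bigcup_{j}\supp\left(  p_{j}\right)  $, which is also what your fusion hypothesis ``$\bigcup_{j}F_{j}=\supp\left(  p_{0}\right)  $'' should have said.
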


\qquad\ \ \ \qquad\ \ \ \qquad\qquad\ \ \ 

Let $p$ be a determined condition. We say that $\left\langle \left(
F_{i},n_{i},\Sigma_{i}\right)  \mid i\in\omega\right\rangle $ is \emph{a
representation of }$p$ if the following holds:

\begin{enumerate}
\item $F_{i}\in\left[  supp\left(  p\right)  \right]  ^{<\omega},$ $n_{i}%
\in\omega.$

\item $F_{i}\subseteq F_{i+1}$ and $n_{i}<n_{i+1}.$

\item $supp\left(  p\right)  =%
{\textstyle\bigcup\limits_{i\in\omega}}
F_{i}.$

\item $p$ is $\left(  F_{i},n_{i}\right)  $-determined for every $i\in\omega.$

\item $\Sigma_{i}$ is the set of all $\sigma:F_{i}\longrightarrow2^{n_{i}}$
such that $\sigma$ is consistent with $p.$
\end{enumerate}

\qquad\ \ \ 

We will also need the following definition:

\begin{definition}
Let $p\in\mathbb{S}_{\alpha}$ be a determined condition and $\dot{r}$ an
$\mathbb{S}_{\alpha}$-name for an element of $2^{\omega}.$ We say that $p$ is
$\dot{r}$\emph{-canonical }if there are $\left\langle \left(  F_{i}%
,n_{i},\Sigma_{i}\right)  \mid i\in\omega\right\rangle $ and $\left\langle
\mathcal{C}_{i}\mid i\in\omega\right\rangle $ with the following properties:

\begin{enumerate}
\item $\left\{  \left(  F_{i},n_{i},\Sigma_{i}\right)  \mid i\in
\omega\right\}  $ is a representation of $p.$

\item $\mathcal{C}_{i}=\left\{  C_{\sigma}\mid\sigma\in\Sigma_{i}\right\}  $
is a collection of disjoint clopen subsets of $2^{\omega}.$

\item For every $\sigma\in\Sigma_{i}$ there is $s_{\sigma}\in2^{n_{i}}$ such
that $C_{\sigma}\subseteq\left\langle s_{\sigma}\right\rangle .$\footnote{If
$t\in2^{<\omega}$ we define $\left\langle t\right\rangle =\left\{
x\in2^{\omega}\mid t\subseteq x\right\}  .$}

\item If $i\in\omega$ and $\sigma\in\Sigma_{i},$ then $p_{\sigma}\Vdash
``\dot{r}\in C_{\sigma}\textquotedblright$ (in particular, $p_{\sigma}$
determines $\dot{r}\upharpoonright n_{i}$).
\end{enumerate}
\end{definition}

\qquad\ \ \ \ \ \ \ \qquad\ \ \ \qquad\ \ 

In the above situation, we say that $\left\langle \left(  F_{i},n_{i}%
,\Sigma_{i},\mathcal{C}_{i}\right)  \mid i\in\omega\right\rangle $ is an
$\dot{r}$\emph{-canonical representation for} $p.$ The following is lemma 6 of
\cite{SacksMiller}:

\begin{lemma}
[\cite{SacksMiller}]Let $\alpha\leq\omega_{2}$ $,p\in\mathbb{S}_{\alpha}$ and
$\dot{r}$ an $\mathbb{S}_{\alpha}$-name for an element of $2^{\omega}$ such
that $p\Vdash``\dot{r}\notin%
{\textstyle\bigcup\limits_{\beta<\alpha}}
V\left[  G_{\beta}\right]  \textquotedblright.$ There is $q\leq p$ such that
$q$ is $\dot{r}$-canonical.
\end{lemma}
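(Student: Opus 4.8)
The plan is to build $q$ as the fusion limit of a decreasing sequence of conditions, reading $\dot{r}$ continuously off the splitting structure as the fusion proceeds. First I would invoke the determinacy lemma to assume that $p$ is already determined, and then construct a fusion sequence $p=q_{0}\geq q_{1}\geq\cdots$ together with finite sets $F_{0}\subseteq F_{1}\subseteq\cdots$ whose union is the support of the eventual limit and integers $n_{0}<n_{1}<\cdots$, where at each stage $q_{i+1}\leq_{F_{i},n_{i}}q_{i}$ in the usual iterated-Sacks fusion order, each $q_{i+1}$ is $(F_{i},n_{i})$-determined, and $\Sigma_{i}$ is the set of $\sigma:F_{i}\longrightarrow2^{n_{i}}$ consistent with $q_{i+1}$. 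The extra demand I would impose is a value-reading: for every $\sigma\in\Sigma_{i}$ there is $w_{\sigma}\in2^{<\omega}$ with $\lvert w_{\sigma}\rvert\geq n_{i}$ such that $(q_{i+1})_{\sigma}\Vdash\dot{r}\in\langle w_{\sigma}\rangle$, and such that $\{w_{\sigma}\mid\sigma\in\Sigma_{i}\}$ is a pairwise incomparable family. Granting this, I would set $C_{\sigma}=\langle w_{\sigma}\rangle$ and $s_{\sigma}=w_{\sigma}\upharpoonright n_{i}$; incomparability gives disjointness of $\mathcal{C}_{i}=\{C_{\sigma}\mid\sigma\in\Sigma_{i}\}$, and all four clauses of $\dot{r}$-canonicity are then visibly satisfied.

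The value-reading step is the main work, and it splits into two parts. The routine part is amalgamation: having passed (by the determinacy lemma together with the fusion order) to a $(F_{i},n_{i})$-determined $\hat{q}\leq_{F_{i-1},n_{i-1}}q_{i}$, I note that the conditions $\hat{q}_{\sigma}$ for distinct $\sigma\in\Sigma_{i}$ live on disjoint portions of the splitting tree over $F_{i}$, so I may refine each independently to some $r_{\sigma}\leq\hat{q}_{\sigma}$ and then reassemble the $r_{\sigma}$ into a single $q_{i+1}\leq_{F_{i},n_{i}}\hat{q}$ with $(q_{i+1})_{\sigma}\leq r_{\sigma}$; this is the standard fusion-amalgamation for $\mathbb{S}_{\alpha}$. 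It therefore suffices, for each pair $\sigma\neq\sigma'$ in $\Sigma_{i}$ handled one at a time, to refine $\hat{q}_{\sigma}$ and $\hat{q}_{\sigma'}$ so that they decide incomparable initial segments of $\dot{r}$; refining further only lengthens the decided segments and preserves incomparability, so the finitely many pairs can be treated in succession, after which all decided segments are extended to a common length $\geq n_{i}$.

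The hard part, and the main place where the hypothesis $p\Vdash\dot{r}\notin\bigcup_{\beta<\alpha}V[G_{\beta}]$ is essential, is the separation itself: that distinct $\sigma$ really can be driven to \emph{incomparable} values of $\dot{r}$. Note first that this is exactly what is required, since $\mathcal{C}_{i}$ is indexed by all of $\Sigma_{i}$ and must be pairwise disjoint, so the assignment $\sigma\mapsto w_{\sigma}$ must be injective; in effect every split of the tree that distinguishes two elements of $\Sigma_{i}$ has to be mirrored by a split in the value of $\dot{r}$. For a pair $\sigma,\sigma'$ differing first at a coordinate $\beta$, I would argue inside $V[G_{\beta}]$, where the tail $\mathbb{S}_{[\beta,\alpha)}$ adds $\dot{r}$ and $\dot{r}\notin V[G_{\beta}]$. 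If the two directions of the $\beta$-th Sacks tree below the relevant node could \emph{not} be refined to force incomparable values of $\dot{r}$, then every pair of extensions of the two sides would decide compatible initial segments, and amalgamating these forced values inside $V[G_{\beta}]$ would produce a single real equal to $\dot{r}$, giving $\dot{r}\in V[G_{\beta}]$ — this is the relativized form of the classical Sacks splitting (minimal-degree) lemma. That contradicts $\dot{r}\notin V[G_{\beta}]$ and delivers the separation. I expect this relativized splitting lemma, together with the bookkeeping that keeps the separating refinements inside the fusion order so that they survive to the limit, to be the principal obstacle.

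Finally I would take $q$ to be a lower bound of the fusion sequence, which exists because the $F_{i}$ exhaust the support and $n_{i}\to\infty$. Then $\langle(F_{i},n_{i},\Sigma_{i})\mid i\in\omega\rangle$ is a representation of $q$, and for each $i$ and each $\sigma\in\Sigma_{i}$ we have $q_{\sigma}\leq(q_{i+1})_{\sigma}\Vdash\dot{r}\in C_{\sigma}$ by monotonicity of forcing, since $\leq_{F_{i},n_{i}}$ freezes the coordinates of $F_{i}$ below level $n_{i}$ and hence keeps $(q_{j})_{\sigma}\leq(q_{i+1})_{\sigma}$ for all $j>i$. Thus $\langle(F_{i},n_{i},\Sigma_{i},\mathcal{C}_{i})\mid i\in\omega\rangle$ is an $\dot{r}$-canonical representation for $q$, so $q$ is $\dot{r}$-canonical, as required.
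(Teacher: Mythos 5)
The paper itself offers no proof of this lemma: it is quoted as Lemma 6 of \cite{SacksMiller}, so there is no internal argument to compare against. Your proposal is a correct reconstruction of the standard proof from that source: a fusion through $(F_{i},n_{i})$-determined conditions, with the crucial separation step (driving distinct twists $\sigma\neq\sigma'$ to force incomparable initial segments of $\dot{r}$) supplied, exactly as in the relativized Sacks splitting argument, by the hypothesis that $p\Vdash\dot{r}\notin\bigcup_{\beta<\alpha}V[G_{\beta}]$ applied in $V[G_{\beta}]$ for $\beta$ the first coordinate where $\sigma$ and $\sigma'$ differ. The one point you gloss over is that the incomparable segments produced inside $V[G_{\beta}]$ are a priori only names: one must additionally extend the common twisted part below $\beta$ to decide their actual values in $V$ before amalgamating, but this is absorbed by the same amalgamation-and-bookkeeping machinery you already invoke, so it is a technical omission rather than a gap in the approach.
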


\qquad\ \ \ \ \qquad\ \ \ \qquad\ \ \ 

With the same proof of the previous lemma, it is possible to prove the following:

\begin{lemma}
Let $\alpha\leq\omega_{2}$ $,p\in\mathbb{S}_{\alpha}$, $\dot{r}$ an
$\mathbb{S}_{\alpha}$-name for an element of $2^{\omega}$ such that
$p\Vdash``\dot{r}\notin%
{\textstyle\bigcup\limits_{\beta<\alpha}}
V\left[  G_{\beta}\right]  \textquotedblright$ and $\dot{g}$ an $\mathbb{S}%
_{\alpha}$-name for an element of $\omega^{\omega}.$ There is $q\leq p$ such
that $q$ is $\dot{r}$-canonical with $\dot{r}$-canonical representation
$\left\langle \left(  F_{i},n_{i},\Sigma_{i},\mathcal{C}_{i}\right)  \mid
i\in\omega\right\rangle $ and there is $\left\langle h_{i}\mid i\in
\omega\right\rangle $ such that the following conditions:

\begin{enumerate}
\item $h_{i}:\Sigma_{i}\longrightarrow\omega$ for every $i\in\omega.$

\item If $i\in\omega$ and $\sigma\in\Sigma_{i}$ then $q_{\sigma}\Vdash
``\dot{g}\left(  i\right)  =h_{i}\left(  \sigma\right)  \textquotedblright.$
\end{enumerate}
\end{lemma}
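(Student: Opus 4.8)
The plan is to take the $\dot r$-canonical representation that already exists for $p$ (by the previous lemma) and refine it so that the additional name $\dot g$ becomes ``read off'' from the same combinatorial data $\sigma\in\Sigma_i$. The crucial observation is that if $p$ is $\dot r$-canonical with representation $\langle(F_i,n_i,\Sigma_i,\mathcal C_i)\mid i\in\omega\rangle$, then for each fixed $i$ and each $\sigma\in\Sigma_i$ the condition $p_\sigma$ is a legitimate condition of $\mathbb S_\alpha$ which already decides $\dot r\upharpoonright n_i$; what we want in addition is that $p_\sigma$ decides $\dot g(i)$. In general a single $p_\sigma$ need not decide $\dot g(i)$, so the proof must shrink the trees further, just as in the construction of an $\dot r$-canonical condition.

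First I would recall the inductive fusion construction from Lemma~6 of \cite{SacksMiller} (the proof of the $\dot r$-canonical lemma) and run it essentially verbatim, but at stage $i$ impose one extra demand. Having chosen $(F_i,n_i,\Sigma_i,\mathcal C_i)$ for the partially built condition, I would process the finitely many $\sigma\in\Sigma_i$ one at a time: for each such $\sigma$, since $p_\sigma\in\mathbb S_\alpha$ decides nothing about $\dot g(i)$ a priori, I would pass below $p_\sigma$ to a condition forcing a definite value $h_i(\sigma)\in\omega$ for $\dot g(i)$, and then amalgamate these finitely many strengthenings back into a single refinement of the current approximation (this amalgamation is exactly the standard ``$(F,n)$-determined'' gluing used to keep $p_\sigma$ coherent across the different $\sigma$'s). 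Because $\Sigma_i$ is finite and the amalgamation preserves the Sacks-tree structure on each coordinate, this step is possible. Iterating over $i\in\omega$ and fusing yields a condition $q\le p$ which is still $\dot r$-canonical and which now also satisfies $q_\sigma\Vdash``\dot g(i)=h_i(\sigma)\textquotedblright$ for every $i$ and every $\sigma\in\Sigma_i$, defining $h_i:\Sigma_i\to\omega$ as required. One must check that the refinements used to decide $\dot g(i)$ do not destroy the $\dot r$-canonical clopen sets $C_\sigma$; since deciding $\dot g(i)$ only shrinks the trees while the $C_\sigma$ are fixed clopen targets with $p_\sigma\Vdash``\dot r\in C_\sigma\textquotedblright$, shrinking $p_\sigma$ preserves this forcing statement, so canonicity is retained.

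The main obstacle I expect is the bookkeeping in the fusion: one must simultaneously maintain the growing representation $\langle(F_i,n_i,\Sigma_i)\rangle$ of the evolving condition and ensure that the coordinate-wise splitting nodes used to witness $(G,m)$-determinedness are not consumed by the value-deciding refinements. Concretely, when I strengthen $p_\sigma$ to decide $\dot g(i)$, I must do so below the splitting level already committed to for $\dot r$, so that the clopen sets $C_\sigma$ and the determinedness of lower levels survive. This is handled by the usual device of only refining above the current finite stem on each active coordinate and keeping infinitely many future splitting nodes in reserve on every coordinate in the support. Since this is precisely the technique of the cited lemma and nothing new is needed beyond interleaving the decision of $\dot g$, the argument goes through ``with the same proof,'' and I would simply indicate the modification rather than rewrite the entire fusion. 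Thus the statement follows.
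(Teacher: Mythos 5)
Your proposal is correct and matches the paper's own argument, which is exactly the remark that Lemma~6 of \cite{SacksMiller} is proved by a fusion argument and that one runs the same fusion with the extra step of deciding the value of $\dot{g}$ at each stage (the paper leaves the details to the reader). Your filled-in details --- deciding $\dot{g}(i)$ below each of the finitely many $p_{\sigma}$ for $\sigma\in\Sigma_{i}$, amalgamating, and noting that shrinking preserves $p_{\sigma}\Vdash``\dot{r}\in C_{\sigma}\textquotedblright$ --- are precisely the intended bookkeeping.
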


\qquad\ \ \qquad\ \ 

The lemma 6 of \cite{SacksMiller} is proved using a fusion argument. To prove
the previous lemma we use the same fusion argument, with the extra step of
deciding the respective value of $\dot{g}$ at each step. We leave the details
for the reader. As before, in the above situation we say that $\left\langle
\left(  F_{i},n_{i},\Sigma_{i},\mathcal{C}_{i},h_{i}\right)  \mid i\in
\omega\right\rangle $ is an $\left(  \dot{r},\dot{g}\right)  $\emph{-canonical
representation for} $q.$

We can now prove the following:

\begin{proposition}
Let $\eta<\omega_{2},\dot{g}$ and $p\in\mathbb{S}_{\eta+1}$ such that
$p\Vdash``\dot{g}:\omega\longrightarrow\omega\textquotedblright.$ There is a
determined $q\in\mathbb{S}_{\eta+1}$ and $\left\{  \left(  n,T_{n}%
,f_{n}\right)  \mid n\in\omega\right\}  $ with $\left\{  \left(  T_{n}%
,f_{n}\right)  \mid n\in\omega\right\}  \subseteq W$ such that the following holds:

\begin{enumerate}
\item $q\leq p.$

\item $q\Vdash``\dot{r}_{\eta}\in^{\ast}\left[  T_{n}\right]
\textquotedblright$ for each $n\in\omega.$

\item $q\Vdash``\dot{g}\left(  n\right)  =f_{n}\left(  \dot{r}_{\eta
}\upharpoonright T_{n}\right)  \textquotedblright$ for every $n\in\omega.$
\end{enumerate}
\end{proposition}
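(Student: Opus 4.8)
The plan is to read off the data $(T_n,f_n)$ directly from the $(\dot r_{\eta},\dot g)$-canonical representation supplied by the preceding lemma. First I would check that that lemma applies with $\dot r=\dot r_{\eta}$: since $\dot r_{\eta}$ is the generic real added at the last coordinate $\eta$ of $\mathbb{S}_{\eta+1}$, it is new over the model generated by the earlier coordinates, so $p\Vdash\dot r_{\eta}\notin\bigcup_{\beta<\eta+1}V\left[G_{\beta}\right]$ holds for every $p$. Applying the lemma to $\dot r_{\eta}$ and $\dot g$ produces a determined $q\leq p$ together with an $\left(\dot r_{\eta},\dot g\right)$-canonical representation $\left\langle \left(F_{i},n_{i},\Sigma_{i},\mathcal{C}_{i},h_{i}\right)\mid i\in\omega\right\rangle$. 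Thus for every $i$ and every $\sigma\in\Sigma_{i}$ the clopen sets in $\mathcal{C}_{i}=\left\{C_{\sigma}\mid\sigma\in\Sigma_{i}\right\}$ are pairwise disjoint, $q_{\sigma}\Vdash\dot r_{\eta}\in C_{\sigma}$, and $q_{\sigma}\Vdash\dot g\left(i\right)=h_{i}\left(\sigma\right)$.

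The crucial observation is that, although $h_{i}\left(\sigma\right)$ depends on all of $\sigma$ (i.e. on the behaviour of the generic at every coordinate of $F_{i}$, not only at $\eta$), the \emph{disjointness} of $\mathcal{C}_{i}$ means that the location of $\dot r_{\eta}$ inside $\bigcup_{\sigma\in\Sigma_{i}}C_{\sigma}$ already recovers the whole of $\sigma$. This is exactly the encoding performed by the name $q\left(\eta\right)$: under the various restrictions $q_{\sigma}$ the coordinate-$\eta$ tree is forced into disjoint clopen pieces, so a finite initial segment of $\dot r_{\eta}$ decides which $\sigma$ is realized. Concretely, for each $i$ I would choose $m_{i}\in\omega$ large enough that every $C_{\sigma}$ with $\sigma\in\Sigma_{i}$ is a union of basic clopen sets $\left\langle t\right\rangle$ with $t\in2^{m_{i}}$ (possible since $\Sigma_{i}$ is finite and each $C_{\sigma}$ is clopen), and set $T_{i}=\left\{t\upharpoonright k\mid k\leq m_{i},\ t\in2^{m_{i}},\ \left\langle t\right\rangle\subseteq C_{\sigma}\text{ for some }\sigma\in\Sigma_{i}\right\}$. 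The leaves of $T_{i}$ are precisely the $t\in2^{m_{i}}$ with $\left\langle t\right\rangle\subseteq C_{\sigma}$ for a unique $\sigma\in\Sigma_{i}$ (unique by disjointness), so putting $f_{i}\left(t\right)=h_{i}\left(\sigma\right)$ for that $\sigma$ gives a well-defined $f_{i}:\left[T_{i}\right]\longrightarrow\omega$, and hence $\left(T_{i},f_{i}\right)\in W$.

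It then remains to verify the three clauses. Given a generic $G\ni q$, the generic realizes a consistent type $\sigma_{G}\in\Sigma_{i}$ — consistency holds because $q$ is $\left(F_{i},n_{i}\right)$-determined — so $q_{\sigma_{G}}\in G$ and $q_{\sigma_{G}}\Vdash\dot r_{\eta}\in C_{\sigma_{G}}$. Since $C_{\sigma_{G}}$ is a union of level-$m_{i}$ basic clopen sets, $\left\langle\dot r_{\eta}\upharpoonright m_{i}\right\rangle\subseteq C_{\sigma_{G}}$, so $\dot r_{\eta}\upharpoonright m_{i}$ is a leaf of $T_{i}$; hence $q\Vdash\dot r_{\eta}\in^{\ast}\left[T_{i}\right]$ and $\dot r_{\eta}\upharpoonright T_{i}=\dot r_{\eta}\upharpoonright m_{i}$, and by the definition of $f_{i}$ we get $f_{i}\left(\dot r_{\eta}\upharpoonright T_{i}\right)=h_{i}\left(\sigma_{G}\right)=\dot g\left(i\right)$. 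Reindexing $i$ as $n$ yields the desired family $\left\{\left(n,T_{n},f_{n}\right)\mid n\in\omega\right\}$, with clause (1) immediate. The step I expect to need the most care is precisely this last verification: confirming that the disjoint clopen family genuinely encodes the full $\sigma$ (not merely its $\eta$-coordinate) into finitely much of $\dot r_{\eta}$, and that determinacy of $q$ forces the generic always to match some $\sigma\in\Sigma_{i}$ so that $\dot r_{\eta}\in\bigcup_{\sigma}C_{\sigma}$; the fact that a single $q$ serves all $i$ at once is already built into the fusion producing the representation.
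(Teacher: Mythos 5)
Your proposal is correct and follows essentially the same route as the paper: both apply the preceding lemma to get an $(\dot{r}_{\eta},\dot{g})$-canonical representation, exploit the disjointness of the clopen families $\mathcal{C}_{i}$ so that a finite initial segment of $\dot{r}_{\eta}$ determines the realized $\sigma\in\Sigma_{i}$, and then read off $T_{n}$ from $\bigcup\mathcal{C}_{n}$ and $f_{n}$ from $h_{n}$. The only differences are cosmetic (you flatten each $C_{\sigma}$ to a uniform level $m_{i}$ where the paper takes the tree generated by the maximal $z$ with $\left\langle z\right\rangle \subseteq\bigcup\mathcal{C}_{i}$, and you verify the clauses semantically in a generic extension where the paper argues via a claim about pairs of conditions forcing the same leaf).
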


\begin{proof}
By the previous lemma, we can find $p_{1}\leq p$ that has an $\left(  \dot
{r}_{\eta},\dot{g}\right)  $-canonical representation $\left\{  \left(
F_{i},n_{i},\Sigma_{i},\mathcal{C}_{i},h_{i}\right)  \mid i\in\omega\right\}
$. We now have the following interesting property: If $G\subseteq
\mathbb{S}_{\eta+1}$ is a generic filter with $p_{1}\in G$ and $\left\langle
r_{\alpha}\right\rangle _{\alpha\leq\eta}$ is the generic sequence, then the
following holds in $V\left[  G\right]  :$

\begin{description}
\item[*)] For every $i\in\omega$ and $\sigma\in\Sigma_{i}$, if $r_{\eta}\in
C_{\sigma}$ then $\sigma\left(  \beta\right)  \subseteq r_{\beta}$ for every
$\beta\in F_{i}.$
\end{description}

\qquad\ \ \ 

This property holds because $\mathcal{C}_{i}=\left\{  C_{\sigma}\mid\sigma
\in\Sigma_{i}\right\}  $ is a collection of disjoint sets. In this way,
$r_{\eta}$ is able to \textquotedblleft code\textquotedblright\ each of the
previews generic reals. Let $Y$ be the set of all maximal $z\in2^{<\omega}$
with the property that $\left\langle z\right\rangle \subseteq%
{\textstyle\bigcup}
\mathcal{C}_{i}.$ Note that since $\mathcal{C}_{i}$ is a finite set of clopen
sets, $Y$ is a finite set. Let $T_{i}$ be the smallest finite tree such that
$Y\subseteq T_{i}.$ Note that $T_{i}$ has the following properties:

\begin{enumerate}
\item $%
{\textstyle\bigcup\limits_{s\in\left[  T_{i}\right]  }}
\left\langle s\right\rangle =%
{\textstyle\bigcup}
\mathcal{C}_{i}.$

\item For every $s\in\left[  T_{i}\right]  $ there is exactly one $\sigma
\in\Sigma_{i}$ for which $\left\langle s\right\rangle \subseteq C_{\sigma}$
(where $\mathcal{C}_{i}=\left\{  C_{\sigma}\mid\sigma\in\Sigma_{i}\right\}  $).
\end{enumerate}

\qquad\ \ \ 

For every $i$ we have the following properties:

\begin{enumerate}
\item $p_{1}\Vdash``\dot{r}_{\eta}\in\left[  T_{i}\right]  \textquotedblright%
.$

\item Let $G\subseteq\mathbb{S}_{\eta+1}$ be a generic filter with $p_{1}\in
G$ and $\sigma\in\Sigma_{i}.$ If $r_{\eta}\in C_{\sigma}$ then $\left(
p_{1}\right)  _{\sigma}\in G.$
\end{enumerate}

\qquad\ \ \ 

We now have the following claim:

\begin{claim}
If $i\in\omega$, $s\in\left[  T_{i}\right]  $ and $q_{0},q_{1}\ $are two
conditions extending$\ p_{1}$ such that $q_{i}\Vdash``s\subseteq\dot{r}_{\eta
}\textquotedblright$ for $j\in\left\{  0,1\right\}  $ then there is
$k\in\omega$ such that $q_{0}\Vdash``\dot{g}\left(  i\right)
=k\textquotedblright$ and $q_{1}\Vdash``\dot{g}\left(  i\right)
=k\textquotedblright.$
\end{claim}

\qquad\ \ \ 

We will prove the claim. Let $\sigma\in\Sigma_{i}$ such that $\left\langle
s\right\rangle \subseteq C_{\sigma}$ and let $j<2.$ Note that since $q_{j}\leq
p_{1}$ and $q_{j}\Vdash``\dot{r}_{\eta}\in C_{\sigma}\textquotedblright,$ it
follows that $q_{j}\Vdash``\left(  p_{1}\right)  _{\sigma}\in\dot
{G}\textquotedblright$ (where $\dot{G}$ is a name for the generic filter),
hence $q_{j}\Vdash``\dot{g}\left(  i\right)  =h_{i}\left(  \sigma\right)
\textquotedblright,$ the claim follows.

\qquad\ \ 

For every $n\in\omega,$ we define a function $f_{n}:\left[  T_{n}\right]
\longrightarrow\omega$ as follows: for every $s\in\left[  T_{n}\right]  ,$ let
$f_{n}\left(  s\right)  $ such that for every $q\leq p_{1}$ if $q\Vdash
``s\subseteq\dot{r}_{\eta}\textquotedblright$ then $q\Vdash``\dot{g}\left(
n\right)  =f_{n}\left(  s\right)  \textquotedblright.$ Note that $f_{n}$ is
well defined by the previous claim. It is easy to see that $\left\{  \left(
n,T_{n},f_{n}\right)  \mid n\in\omega\right\}  $ has the desired properties.
\end{proof}

\qquad\ \ \ \qquad\ \qquad\ \ \ \ 

We will say that a graph $U:\left[  \omega_{1}\right]  ^{2}\longrightarrow W$
is $\left(  1,\omega_{1}\right)  $\emph{-weakly universal} if for every
$F:\left[  \omega_{1}\right]  ^{2}\longrightarrow\omega$ there is an injective
$h:\omega_{1}\longrightarrow\omega_{1}$ and a function $e:W\longrightarrow
\omega$ such that $F\left(  \alpha,\beta\right)  =e\left(  U\left(  h\left(
\alpha\right)  ,h\left(  \beta\right)  \right)  \right)  .$ As expected, we
have the following result:

\begin{lemma}
If there is a $U:\left[  \omega_{1}\right]  ^{2}\longrightarrow W$ which is
$\left(  1,\omega_{1}\right)  $-weakly universal, then there is $U_{1}:\left[
\omega_{1}\right]  ^{2}\longrightarrow\omega$ that is $\left(  1,\omega
_{1}\right)  $-weakly universal.
\end{lemma}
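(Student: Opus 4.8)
The statement asserts that a $W$-valued $(1,\omega_{1})$-weakly universal graph gives rise to an $\omega$-valued one. Since $W$ is a countable set, the plan is simply to transport the universality across a fixed bijection. First I would fix, once and for all, a bijection $\varphi:W\longrightarrow\omega$ (such exists because $W$ is countable, as noted in the excerpt). Given the hypothesized $U:\left[\omega_{1}\right]^{2}\longrightarrow W$ that is $(1,\omega_{1})$-weakly universal, I define $U_{1}:\left[\omega_{1}\right]^{2}\longrightarrow\omega$ by $U_{1}(\alpha,\beta)=\varphi\left(U(\alpha,\beta)\right)$. The claim will be that $U_{1}$ is $(1,\omega_{1})$-weakly universal in the usual ($\omega$-valued) sense.

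To verify this, let an arbitrary $F:\left[\omega_{1}\right]^{2}\longrightarrow\omega$ be given. By the $W$-valued universality of $U$, there are an injective $h:\omega_{1}\longrightarrow\omega_{1}$ and a function $e:W\longrightarrow\omega$ with $F(\alpha,\beta)=e\left(U\left(h(\alpha),h(\beta)\right)\right)$ for all $\alpha,\beta$. The natural candidate for the $\omega$-valued decoding map is $e_{1}=e\circ\varphi^{-1}:\omega\longrightarrow\omega$, keeping the same embedding $h$. Then for all $\alpha,\beta\in\omega_{1}$ one computes
\[
e_{1}\left(U_{1}\left(h(\alpha),h(\beta)\right)\right)
= e\left(\varphi^{-1}\left(\varphi\left(U\left(h(\alpha),h(\beta)\right)\right)\right)\right)
= e\left(U\left(h(\alpha),h(\beta)\right)\right)
= F(\alpha,\beta),
\]
where the middle equality uses $\varphi^{-1}\circ\varphi=\mathrm{id}_{W}$. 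Since $h$ is already injective and $e_{1}$ is a genuine function from $\omega$ to $\omega$, the pair $(h,e_{1})$ witnesses $(1,\omega_{1})$-weak universality of $U_{1}$, as required.

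There is essentially no obstacle here: the entire content is that composing with a fixed bijection between the countable index sets preserves the factorization $F=e\circ U\circ(h\times h)$, and countability of $W$ is exactly what guarantees such a bijection into $\omega$ exists. The only point worth a moment's care is that $\varphi$ must be fixed before $F$ is introduced, so that $U_{1}$ does not depend on $F$; the decoding map $e_{1}$ is the only object allowed to vary with $F$, and it does so legitimately since $e$ varies with $F$. Thus the lemma is immediate once the right bijection is named.
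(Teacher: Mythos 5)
Your proof is correct and follows exactly the paper's approach: the paper also fixes a bijection $g:W\longrightarrow\omega$, sets $U_{1}=g\circ U$, and leaves the verification as "easy to see," which is precisely the computation with $e_{1}=e\circ g^{-1}$ that you carried out explicitly.
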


\begin{proof}
Let $U:\left[  \omega_{1}\right]  ^{2}\longrightarrow W$ be a $\left(
1,\omega_{1}\right)  $-weakly universal graph. Fix $g:W\longrightarrow\omega$
a bijective function. We define $U_{1}:\left[  \omega_{1}\right]
^{2}\longrightarrow\omega$ where $U_{1}\left(  \alpha,\beta\right)  =g\left(
U\left(  \alpha,\beta\right)  \right)  .$ It is easy to see that $U_{1}$ is
$\left(  1,\omega_{1}\right)  $-weakly universal.
\end{proof}

\qquad\ \ \ 

For the rest of this section, we will assume the Continuum Hypothesis. Fix a
large enough regular cardinal $\theta>\left(  2^{\omega_{2}}\right)  ^{+}.$ We
will now fix $\overline{M}=\{(M_{\alpha},\in,\Vdash_{\mathbb{S}_{\eta_{\alpha
}+1}},p_{\alpha},\eta_{\alpha},\xi_{\alpha},\dot{g}_{\alpha})\mid\alpha
\in\omega_{1}\}$ with the following properties:

\begin{enumerate}
\item $M_{\alpha}$ is a countable elementary submodel of $H\left(
\theta\right)  $ such that $p_{\alpha},\eta_{\alpha},\xi_{\alpha},\dot
{g}_{\alpha}\in M_{\alpha}.$

\item $\eta_{\alpha}<\omega_{2}$ and $p_{\alpha}\in\mathbb{S}_{\eta_{\alpha
}+1}.$

\item $\xi_{\alpha}<\omega_{1}$ and $p_{\alpha}\Vdash``\dot{g}_{\alpha}%
:\xi_{\alpha}\longrightarrow\omega\textquotedblright.$

\item For every $(N,\in,\Vdash_{\mathbb{S}_{\eta+1}},p,\eta,\xi,\dot{g})$ if
the following properties hold:

\begin{enumerate}
\item $N$ is a countable elementary submodel of $H\left(  \theta\right)  $
such that $p,\eta,\xi,\dot{g}\in N.$

\item $\eta<\omega_{2}$ and $p\in\mathbb{S}_{\eta_{+1}}.$

\item $\xi<\omega_{1}$ and $p\Vdash``\dot{g}:\xi\longrightarrow\omega
\textquotedblright.$

Then, there is $\alpha<\omega_{1}$ such that $(M_{\alpha},\in,\Vdash
_{\mathbb{S}_{\eta_{\alpha}+1}},p_{\alpha},\eta_{\alpha},\xi_{\alpha},\dot
{g}_{\alpha})$ and $(N,\in,\Vdash_{\mathbb{S}_{\eta+1}},p,\eta,\xi,\dot{g})$
are isomorphic.
\end{enumerate}
\end{enumerate}

\qquad\ \ \ 

This is possible since $\mathbb{S}_{\eta+1}$ is proper and we are assuming the
Continuum Hypothesis. For every $\alpha<\omega_{1},$ let $\delta_{\alpha
}=M_{\alpha}\cap\omega_{1}.$ We now choose $\left\{  \beta_{\alpha}\mid
\alpha\in\omega_{1}\right\}  \subseteq\omega_{1}$ such that $\delta_{\alpha
}<\beta_{\alpha}$ and if $\alpha_{1}\neq\alpha_{2}$ then $\beta_{\alpha_{1}%
}\neq\beta_{\alpha_{2}}.$ For every $\alpha<\omega_{1},$ we also fix an
enumeration $\xi_{\alpha}=\left\{  \xi_{\alpha}\left(  n\right)  \mid
n\in\omega\right\}  .$ By the previous lemmas, for every $\alpha<\omega_{1},$
we can find $q_{\alpha},$ $\left\{  \left(  n,T_{n}^{\alpha},f_{n}^{\alpha
}\right)  \mid n\in\omega\right\}  $ such that the following holds:

\begin{enumerate}
\item $q_{\alpha}\in\mathbb{S}_{\eta_{\alpha}+1}\cap M_{\alpha}$ and
$q_{\alpha}\leq p_{\alpha}.$

\item $\left\{  \left(  T_{n}^{\alpha},f_{n}^{\alpha}\right)  \mid n\in
\omega\right\}  \subseteq W.$

\item $q_{\alpha}\Vdash``\dot{r}_{\eta_{\alpha}}\in^{\ast}\left[
T_{n}^{\alpha}\right]  \textquotedblright$ for each $n\in\omega.$

\item $q_{\alpha}\Vdash``\dot{g}_{\alpha}\left(  \xi_{\alpha}\left(  n\right)
\right)  =f_{n}^{\alpha}\left(  \dot{r}_{\eta_{\alpha}}\upharpoonright
T_{n}^{\alpha}\right)  \textquotedblright$ for every $n\in\omega.$
\end{enumerate}

\qquad\ \ \qquad\ 

We now define the graph $U:\left[  \omega_{1}\right]  ^{2}\longrightarrow W$
as follows: given $\alpha<\omega_{1}$ and $n\in\omega$ we define $U\left(
\xi_{\alpha}\left(  n\right)  ,\beta_{\alpha}\right)  =\left(  T_{n}^{\alpha
},f_{n}^{\alpha}\right)  $ (the value of $U$ is not important in any other
case, so if a pair $\left(  \nu_{1},\nu_{2}\right)  $ is not of the form
$\left(  \xi_{\alpha}\left(  n\right)  ,\beta_{\alpha}\right)  ,$ we can let
$U\left(  \nu_{1},\nu_{2}\right)  $ be any element of $W$ otherwise). We will
show that $U$ is forced to be $\left(  1,\omega_{1}\right)  $-weakly
universal. Given $\eta<\omega_{2}$, in the forcing extension, we define the
function $e_{\eta}:W\longrightarrow\omega$ given by $e_{\eta}\left(
T,f\right)  =f\left(  \dot{r}_{\eta}\upharpoonright T\right)  $ if $\dot
{r}_{\eta}\in^{\ast}\left[  T\right]  $ and $e_{\eta}\left(  T,f\right)  =0$
in other case. We need the following lemma:

\begin{lemma}
Let $G\subseteq\mathbb{S}_{\omega_{2}}$ be a generic filter. Let $\eta
<\omega_{2},\xi<\omega_{1}$ and $g:\xi\longrightarrow\omega$ such that $g\in
V\left[  G_{\eta+1}\right]  .$ \ There is $\alpha\in\omega_{1}$ such that the
following holds:

\begin{enumerate}
\item $\xi_{\alpha}=\xi.$

\item $g\left(  \xi_{\alpha}\left(  n\right)  \right)  =e_{\eta}(U\left(
\xi_{\alpha}\left(  n\right)  ,\beta_{\alpha}\right)  )$ for every $n\in
\omega.$
\end{enumerate}
\end{lemma}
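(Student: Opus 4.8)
The plan is to reduce the statement to a density argument inside $\mathbb{S}_{\eta+1}$. Since $g\in V[G_{\eta+1}]$, fix an $\mathbb{S}_{\eta+1}$-name $\dot{g}$ and a condition $p\in G_{\eta+1}$ with $g=\dot{g}[G_{\eta+1}]$ and $p\Vdash``\dot{g}:\xi\longrightarrow\omega"$. Both $U$ and the function $e_{\eta}$ depend only on objects of $V[G_{\eta+1}]$ (the graph $U$ lies in $V$, and $e_{\eta}$ is computed from $r_{\eta}\in V[G_{\eta+1}]$), so the whole conclusion is a statement about $V[G_{\eta+1}]$. Hence it suffices to prove that $p\Vdash_{\mathbb{S}_{\eta+1}}``\exists\alpha\,(\xi_{\alpha}=\xi\wedge\forall n\,\dot{g}(\xi_{\alpha}(n))=e_{\eta}(U(\xi_{\alpha}(n),\beta_{\alpha})))"$; the lemma then follows immediately by genericity, as $p\in G_{\eta+1}$.

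To establish this I would show that the set of conditions forcing the displayed existential (with an explicit witness $\alpha$) is dense below $p$. So fix an arbitrary $p'\leq p$ and choose a countable elementary submodel $N\prec H(\theta)$ with $p',\eta,\xi,\dot{g}\in N$. Applying clause (4) in the definition of the family $\overline{M}$ to the structure $(N,\in,\Vdash_{\mathbb{S}_{\eta+1}},p',\eta,\xi,\dot{g})$ yields an $\alpha<\omega_{1}$ and an isomorphism $\Phi:M_{\alpha}\longrightarrow N$ carrying $p_{\alpha}\mapsto p'$, $\eta_{\alpha}\mapsto\eta$, $\xi_{\alpha}\mapsto\xi$ and $\dot{g}_{\alpha}\mapsto\dot{g}$.

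The key point is then to transport the coding data attached to $M_{\alpha}$ through $\Phi$. Since $M_{\alpha}$ and $N$ are isomorphic they share a transitive collapse and $\delta_{\alpha}=M_{\alpha}\cap\omega_{1}=N\cap\omega_{1}=:\delta$; as $\xi_{\alpha}<\delta$ and $\xi<\delta$, the collapse is the identity on both ordinals, so $\Phi(\xi_{\alpha})=\xi$ forces the literal equality $\xi_{\alpha}=\xi$ (the first required property). The map $\Phi$ also fixes every ordinal below $\delta$ (in particular each $\xi_{\alpha}(n)$), fixes each element of the fixed countable set $W$ (so $\Phi(T_{n}^{\alpha},f_{n}^{\alpha})=(T_{n}^{\alpha},f_{n}^{\alpha})$), and sends the name $\dot{r}_{\eta_{\alpha}}$ for the $\eta_{\alpha}$-th generic real to $\dot{r}_{\eta}$. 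The witnessing data $q_{\alpha}$ and $\langle(T_{n}^{\alpha},f_{n}^{\alpha})\mid n\in\omega\rangle$ lie in $M_{\alpha}$, so the statement ``$q_{\alpha}\Vdash\dot{r}_{\eta_{\alpha}}\in^{\ast}[T_{n}^{\alpha}]\wedge\dot{g}_{\alpha}(\xi_{\alpha}(n))=f_{n}^{\alpha}(\dot{r}_{\eta_{\alpha}}\upharpoonright T_{n}^{\alpha})$ for all $n$'', which holds in $M_{\alpha}$, transfers through $\Phi$ to $q:=\Phi(q_{\alpha})\leq p'$, giving $q\Vdash\dot{r}_{\eta}\in^{\ast}[T_{n}^{\alpha}]\wedge\dot{g}(\xi_{\alpha}(n))=f_{n}^{\alpha}(\dot{r}_{\eta}\upharpoonright T_{n}^{\alpha})$ for every $n$. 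By the definitions of $e_{\eta}$ and $U$ this is exactly $q\Vdash\dot{g}(\xi_{\alpha}(n))=e_{\eta}(U(\xi_{\alpha}(n),\beta_{\alpha}))$, so $q$ forces the existential with this $\alpha$ as witness. Density is established, and the lemma follows.

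I expect the main obstacle to be the careful bookkeeping of the isomorphism $\Phi$: verifying that it transports the forcing relation and the canonical names correctly (especially $\dot{r}_{\eta_{\alpha}}\mapsto\dot{r}_{\eta}$, and the entire coding sequence, which one must check lies in $M_{\alpha}$), and that the transitive-collapse argument genuinely delivers the literal equality $\xi_{\alpha}=\xi$ rather than a merely order-isomorphic copy. Once these standard facts about proper forcing and elementary submodels are in place, the remainder is routine.
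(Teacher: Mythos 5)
Your proof is correct and takes essentially the same route as the paper's: reduce to a density argument in $\mathbb{S}_{\eta+1}$, pick a countable elementary submodel $N\prec H(\theta)$ containing $p,\eta,\xi,\dot{g}$, invoke clause (4) of the construction of $\overline{M}$ to obtain an isomorphic $M_{\alpha}$, and transfer the witnessing data $q_{\alpha},(T_{n}^{\alpha},f_{n}^{\alpha})$ through the isomorphism, which fixes all ordinals below $\delta_{\alpha}$ and every element of $W$, so that $q=\Phi(q_{\alpha})$ forces $\dot{g}(\xi_{\alpha}(n))=e_{\eta}(U(\xi_{\alpha}(n),\beta_{\alpha}))$ for all $n$. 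The only differences are presentational: you spell out the absoluteness reduction to $V[G_{\eta+1}]$ and the transitive-collapse argument giving the literal equality $\xi_{\alpha}=\xi$, both of which the paper leaves implicit.
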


\begin{proof}
It is enough to show that the conditions that force the above properties are
dense, in this way, there will be such condition in the generic filter. Let
$p\in\mathbb{S}_{\eta+1},$ we will see we can extend $p$ to get the desired
conclusion. Let $N$ be a countable elementary submodel such that $p,\eta
,\xi,\dot{g}\in N.$ We first find $\alpha<\omega_{1}$ such that $(M_{\alpha
},\in,\Vdash_{\mathbb{S}_{\eta_{\alpha}+1}},p_{\alpha},\eta_{\alpha}%
,\xi_{\alpha},\dot{g}_{\alpha})$ and $(N,\in,\Vdash_{\mathbb{S}_{\eta+1}%
},p,\eta,\xi,\dot{g})$ are isomorphic. Let $\pi:M_{\alpha}\longrightarrow N$
be the isomorphism and let $q=$ $\pi\left(  q_{\alpha}\right)  .$ Note that
the isomorphism fixes every ordinal smaller than $\delta_{\alpha}$ (in
particular each $\xi_{\alpha}\left(  n\right)  $) as well as each element in
$W.$ By the isomorphism, the following conditions hold:

\begin{enumerate}
\item $q\in\mathbb{S}_{\eta_{+1}}\cap N$ and $q\leq p.$

\item $q\Vdash``\dot{r}_{\eta}\in^{\ast}\left[  T_{n}^{\alpha}\right]
\textquotedblright$ for each $n\in\omega.$

\item $q\Vdash``\dot{g}\left(  \xi_{\alpha}\left(  n\right)  \right)
=f_{n}^{\alpha}\left(  \dot{r}_{\eta}\upharpoonright T_{n}^{\alpha}\right)
\textquotedblright$ for every $n\in\omega.$
\end{enumerate}

\qquad\ \ 

By the last clause, it follows that $q\Vdash``\dot{g}\left(  \xi_{\alpha
}\left(  n\right)  \right)  =e_{\eta}(U\left(  \xi_{\alpha}\left(  n\right)
,\beta_{\alpha}\right)  )\textquotedblright.$
\end{proof}

\qquad\ \ \ 

We can then prove the following:

\begin{proposition}
There is a $\left(  1,\omega_{1}\right)  $-weakly universal graph in the Sacks model.
\end{proposition}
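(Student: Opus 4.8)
The plan is to show that the graph $U:\left[\omega_{1}\right]^{2}\longrightarrow W$ constructed above is forced to be $\left(1,\omega_{1}\right)$-weakly universal, and then invoke the earlier lemma to convert it into a $U_{1}:\left[\omega_{1}\right]^{2}\longrightarrow\omega$ with the same property. So let $G\subseteq\mathbb{S}_{\omega_{2}}$ be generic and, working in $V\left[G\right]$, let $F:\left[\omega_{1}\right]^{2}\longrightarrow\omega$ be arbitrary. I must produce an injective $h:\omega_{1}\longrightarrow\omega_{1}$ and a function $e:W\longrightarrow\omega$ with $F\left(\alpha,\beta\right)=e\left(U\left(h\left(\alpha\right),h\left(\beta\right)\right)\right)$. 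The idea is to feed $F$, one ``column'' at a time, into the coding lemma just proved: for each ordinal I will use a single generic real $\dot{r}_{\eta}$ to code the values of $F$ against all smaller ordinals, and the functions $e_{\eta}$ defined above will read those values back off.

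The key steps, in order, are as follows. First I would recursively build the map $h$ and an increasing sequence of ordinals $\eta_{\xi}<\omega_{2}$ so that, for each $\xi<\omega_{1}$, the ``column function'' $g_{\xi}:\xi\longrightarrow\omega$ defined by $g_{\xi}\left(\zeta\right)=F\left(\zeta,\xi\right)$ lands in an intermediate model $V\left[G_{\eta_{\xi}+1}\right]$; this is possible because each $g_{\xi}$ is a function on a countable set, hence (by the $\omega_{2}$-chain-condition and properness of the iteration) appears at some countable stage of the iteration. Second, I would apply the preceding Lemma to $\eta=\eta_{\xi}$, $\xi$ and $g=g_{\xi}$: it yields an $\alpha<\omega_{1}$ with $\xi_{\alpha}=\xi$ and $g_{\xi}\left(\xi_{\alpha}\left(n\right)\right)=e_{\eta_{\xi}}\bigl(U\left(\xi_{\alpha}\left(n\right),\beta_{\alpha}\right)\bigr)$ for all $n\in\omega$. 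Since the enumeration $\xi_{\alpha}=\left\{\xi_{\alpha}\left(n\right)\mid n\in\omega\right\}$ ranges over all of $\xi$, this says precisely $F\left(\zeta,\xi\right)=e_{\eta_{\xi}}\bigl(U\left(\zeta,\beta_{\alpha}\right)\bigr)$ for every $\zeta<\xi$. Third, I would set $h\left(\xi\right)=\beta_{\alpha}$ for the $\alpha$ so produced, and take $e=e_{\eta_{\xi}}$; the point is that $e_{\eta}$ depends only on $\eta$, so a single $e$ works across the whole construction provided I use one fixed generic real.

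The main obstacle is reconciling the per-column witnesses into a single globally coherent pair $\left(h,e\right)$. The Lemma as stated produces, for each column $\xi$, its own reading function $e_{\eta_{\xi}}$, but the definition of $\left(1,\omega_{1}\right)$-weak universality demands one $e:W\longrightarrow\omega$ serving all pairs simultaneously. The way around this is to arrange the recursion so that $\beta_{\alpha}$ is chosen afresh for each $\xi$, with the $\beta$-coordinates pairwise distinct (as guaranteed by the choice of $\left\{\beta_{\alpha}\right\}$), and to route \emph{all} columns through a single generic index by reserving one coordinate $\eta^{*}$ and arranging that every $g_{\xi}\in V\left[G_{\eta^{*}+1}\right]$; then $e=e_{\eta^{*}}$ is the required uniform reading function. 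I must also check that the resulting $h$ is genuinely injective, which follows from the distinctness of the $\beta_{\alpha}$'s and from choosing the witnessing $\alpha$'s to be distinct across columns. Finally I would verify that the density argument inside the Lemma indeed places the needed condition in $G$ for every column simultaneously, i.e. that the relevant dense sets are genuinely dense below any condition; once that is in hand, the equation $F\left(\alpha,\beta\right)=e\left(U\left(h\left(\alpha\right),h\left(\beta\right)\right)\right)$ holds throughout $\left[\omega_{1}\right]^{2}$, establishing that $U$ is $\left(1,\omega_{1}\right)$-weakly universal and completing the proof via the conversion lemma.
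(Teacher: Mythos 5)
Your overall strategy matches the paper's: show the $W$-valued graph $U$ already constructed is forced to be weakly universal, convert via the earlier lemma, and drive the construction of $h$ by feeding column functions into the key density lemma. Your resolution of the ``single $e$'' obstacle is also, in the end, the paper's opening move: by the $\omega_{2}$-chain condition one may fix a single $\eta<\omega_{2}$ such that $\dot{F}$ is an $\mathbb{S}_{\eta}$-name, so everything lives in $V\left[  G_{\eta+1}\right]  $ and the one function $e_{\eta}$ serves throughout; your ``reserved coordinate $\eta^{\ast}$'' is this same observation, arrived at as a patch rather than as the first step.

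However, there is a genuine gap at the central step: your column functions are the wrong ones. You set $g_{\xi}\left(  \zeta\right)  =F\left(  \zeta,\xi\right)  $, apply the lemma to get $F\left(  \zeta,\xi\right)  =e_{\eta}\bigl(U\left(  \zeta,\beta_{\alpha}\right)  \bigr)$ for all $\zeta<\xi$, and then define $h\left(  \xi\right)  =\beta_{\alpha}$. What this yields is $F\left(  \zeta,\xi\right)  =e_{\eta}\bigl(U\left(  \zeta,h\left(  \xi\right)  \right)  \bigr)$: only the \emph{second} coordinate has been passed through $h$, whereas weak universality requires $F\left(  \zeta,\xi\right)  =e_{\eta}\bigl(U\left(  h\left(  \zeta\right)  ,h\left(  \xi\right)  \right)  \bigr)$. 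Since your $h$ is nowhere the identity (every value is some $\beta_{\alpha^{\prime}}>\delta_{\alpha^{\prime}}>\xi_{\alpha^{\prime}}$), the equation you obtain is not the one needed, and nothing in your construction relates $U\left(  \zeta,h\left(  \xi\right)  \right)  $ to $U\left(  h\left(  \zeta\right)  ,h\left(  \xi\right)  \right)  $. The repair is exactly the paper's recursion in $V\left[  G_{\eta+1}\right]  $: having built the partial embedding $h\upharpoonright\gamma$, put $\xi=\bigcup h\left[  \gamma\right]  +1$ and define $g:\xi\longrightarrow\omega$ by $g\left(  h\left(  \delta\right)  \right)  =F\left(  \delta,\gamma\right)  $ for $\delta<\gamma$ (arbitrary elsewhere) --- that is, pull the column of $F$ back through the part of $h$ already constructed --- and only then apply the lemma to this $g$ and set $h\left(  \gamma\right)  =\beta_{\alpha}$; writing $h\left(  \delta\right)  =\xi_{\alpha}\left(  n\right)  $ one gets $e_{\eta}\bigl(U\left(  h\left(  \delta\right)  ,\beta_{\alpha}\right)  \bigr)=g\left(  h\left(  \delta\right)  \right)  =F\left(  \delta,\gamma\right)  $ as required. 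This dependence of the column function on $h\upharpoonright\gamma$ is precisely what your scheme omits: your $g_{\xi}$ depends only on $F$, so no choice of $\eta^{\ast}$ or of distinct witnesses $\alpha$ can fix the mismatch in the first coordinate. (Your injectivity worry, by contrast, is harmless: distinct columns force distinct $\xi_{\alpha}$'s, hence distinct $\beta_{\alpha}$'s, and in the recursion $\beta_{\alpha}\notin h\left[  \gamma\right]  $ because $h\left[  \gamma\right]  \subseteq\xi_{\alpha}<\delta_{\alpha}<\beta_{\alpha}$.)
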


\begin{proof}
We will show that $U$ is forced to be a $\left(  1,\omega_{1}\right)  $-weakly
universal graph (note that this is enough by lemma 17). Let $p\in
\mathbb{S}_{\omega_{2}}$ and $\dot{F}$ such that $p\Vdash``\dot{F}:\left[
\omega_{1}\right]  ^{2}\longrightarrow\omega\textquotedblright.$ Since Sacks
forcing has the $\omega_{2}$-chain condition, we may assume that there is
$\eta<\omega_{2}$ such that $p\in\mathbb{S}_{\eta}$ and $\dot{F}$ is an
$\mathbb{S}_{\eta}$-name.

\qquad\ \ 

Given $\gamma\leq\omega_{1},$ we will say that an injective function
$h:\gamma\longrightarrow\omega_{1}$ is a \emph{partial }$e_{\eta}%
$\emph{-embedding }if $F\left(  \alpha,\beta\right)  =e_{\eta}(U\left(
h\left(  \alpha\right)  ,h\left(  \beta\right)  \right)  )$ for every
$\alpha,\beta<\gamma.$ Let $G$ be a generic filter such that $p\in G.$ We
claim that in $V\left[  G_{\eta+1}\right]  $ the following holds:

\begin{description}
\item[*)] If $h:\gamma\longrightarrow\omega_{1}$ is a partial $e_{\eta}%
$-embedding with $\gamma<\omega_{1},$ then there is a partial $e_{\eta}%
$-embedding $\overline{h}:\gamma+1\longrightarrow\omega_{1}$ extending $h.$
\end{description}

\qquad\ \ 

We argue in $V\left[  G_{\eta+1}\right]  .$ Let $\xi=%
{\textstyle\bigcup}
h\left[  \gamma\right]  +1$ and note we can find $g\in V\left[  G_{\eta
+1}\right]  $ such that $g:\xi\longrightarrow\omega$ and $g\left(  h\left(
\delta\right)  \right)  =F\left(  \delta,\gamma\right)  $ for all
$\delta<\gamma.$ By the previous lemma, there is $\alpha\in\omega_{1}$ such
that $\xi_{\alpha}=\xi$ and $g\left(  \xi_{\alpha}\left(  n\right)  \right)
=e_{\eta}(U\left(  \xi_{\alpha}\left(  n\right)  ,\beta_{\alpha}\right)  ).$
We now define $\overline{h}=h\cup\left\{  \left(  \gamma,\beta_{\alpha
}\right)  \right\}  .$ Note that $\beta_{\alpha}\notin h\left[  \gamma\right]
$ since $h\left[  \gamma\right]  \subseteq\xi=\xi_{\alpha}<\delta_{\alpha
}<\beta_{\alpha}.$ We only need to prove that $\overline{h}$ is a partial
$e_{\eta}$-embedding. Let $\delta<\gamma,$ we can find $n\in\omega$ such that
$h\left(  \delta\right)  =\xi_{\alpha}\left(  n\right)  .$ It then follows that:

\qquad\ \ %

\begin{tabular}
[t]{lll}%
$e_{\eta}(U\left(  \overline{h}\left(  \delta\right)  ,\overline{h}\left(
\gamma\right)  \right)  )$ & $=$ & $e_{\eta}(U\left(  \xi_{\alpha}\left(
n\right)  ,\beta_{\alpha}\right)  )$\\
& $=$ & $g\left(  \xi_{\alpha}\left(  n\right)  \right)  $\\
& $=$ & $g\left(  h\left(  \delta\right)  \right)  $\\
& $=$ & $F\left(  \delta,\gamma\right)  $%
\end{tabular}

\qquad\ \ \bigskip

This finishes the claim. It is clear that any maximal $e_{\eta}$-embedding
will embed $F$ into $U.$
\end{proof}

\subsection{Open questions}

In general, a function $U:\left[  \omega_{1}\right]  ^{2}\longrightarrow
\omega$ is $\left(  1,\kappa\right)  $-\emph{weakly universal }if for every
$F:\left[  \omega_{1}\right]  ^{2}\longrightarrow\omega$ there is an injective
function $h:\omega_{1}\longrightarrow\omega_{1}$ and a function $e:\omega
\longrightarrow\omega$ such that $\left\vert e^{-1}\left(  n\right)
\right\vert <\kappa$ for every $n\in\omega$ and \ $F\left(  \alpha
,\beta\right)  =e\left(  U\left(  h\left(  \alpha\right)  ,h\left(
\beta\right)  \right)  \right)  $ for every $\alpha,\beta\in\omega_{1}$. It
would be interesting to know the answer of the following question:

\begin{problem}
Are there $\left(  1,\omega\right)  $-weakly universal functions (or even
$\left(  1,2\right)  $-weakly universal functions) in the Sacks model?
\end{problem}

\qquad\ \ \ \ \qquad\ \ \ \ \ \qquad\ \ 

In fact, we conjecture that $_{\mid}^{\bullet}+\mathfrak{c}>\omega_{1}$
implies that there are no $\left(  1,\omega\right)  $-weakly universal functions.

\qquad\ \ \ 

We would like to mention that there are no $\left(  1,\omega_{1}\right)
$-weakly universal functions after performing a pseudo-iteration of Cohen
forcing, as described in \cite{SticksandClubs}. It would be interesting to
know what kind of universal graphs exist on the \textquotedblleft canonical
models\textquotedblright\ of set theory.\qquad\ \ \ 

\begin{problem}
Are there $\left(  1,\omega_{1}\right)  $-weakly universal functions in the
random, Hechler, Laver, Miller and Mathias models?
\end{problem}

\qquad\ \ \ \qquad\ \ \ \ 

The purpose of the \textsf{CPA }axioms introduced in \cite{CPAbook} is to
provide an axiomatization of the Sacks model. In light of this work, it is
then natural to ask the following:

\begin{problem}
Does the existence of $\left(  1,\omega_{1}\right)  $-weakly universal
functions follow from one of the \textsf{CPA }axioms?
\end{problem}

\begin{acknowledgement}
The author would like to thank Juris Stepr\={a}ns for several discussions on
this topic and hours of stimulating conversations. I would also like to thank
Damjan Kalajdzievski for several comments and discussions regarding universal
functions. The author would also like to thank the generous referee for all
his/her careful corrections and comments.
\end{acknowledgement}

\bibliographystyle{plain}
\bibliography{universal}

\qquad\ \ 

Osvaldo Guzm\'{a}n

York University

oguzman9@yorku.ca
\end{document}